\title{On the uniqueness of Yangians}
\author[S. Gautam]{Sachin Gautam}
\address{Department of Mathematics, The Ohio State University}
\email{gautam.42@osu.edu}
\author[C. Wendlandt]{Curtis Wendlandt}
\address{Department of Mathematics and Statistics, University of Saskatchewan}
\email{wendlandt@math.usask.ca}
\author[S. Xu]{Siwei Xu}
\address{Department of Mathematics, The Ohio State University}
\email{xu.4877@osu.edu}
\subjclass[2020]{Primary 17B37; Secondary 17B62}
\newtheorem*{thm}{Theorem}
\newtheorem*{lem}{Lemma}
\newtheorem*{cor}{Corollary}
\newtheorem*{prop}{Proposition}
\let\oldproofname=\proofname
\renewcommand{\proofname}{\upshape\textsc{\oldproofname}}
\theoremstyle{definition}
\newtheorem*{rem}{Remark}
\numberwithin{equation}{section}
\newcommand{\lp}{\left(}
\newcommand{\rp}{\right)}
\newcommand{\g}{\mathfrak{g}}
\newcommand{\h}{\mathfrak{h}}
\newcommand{\Lsl}{\mathfrak{sl}}
\newcommand{\T}{\mathcal{T}}
\newcommand{\U}{\mathcal{U}}
\newcommand{\A}{\mathcal{A}}
\newcommand{\CC}{\mathcal{C}}
\newcommand{\K}{\mathcal{K}}
\newcommand{\J}{\mathcal{J}}
\newcommand{\C}{\mathbb{C}}
\newcommand{\N}{\mathbb{N}}
\newcommand{\Z}{\mathbb{Z}}
\newcommand {\ul}[1]{\underline{#1}}
\newcommand{\End}{\operatorname{End}}
\newcommand{\Hom}{\operatorname{Hom}}
\newcommand {\aand}{\qquad\text{and}\qquad}
\newcommand {\Aut}{\operatorname{Aut}}
\newcommand{\ds}{\displaystyle}
\newcommand {\Omit}[1]{}
\newcommand {\ad}{\operatorname{ad}}
\newcommand {\Yhg}{Y_\hbar(\g)}
\newcommand{\iso}{\xrightarrow{\,\smash{\raisebox{-0.5ex}{\ensuremath{\scriptstyle\sim}}}\,}}
\newcommand {\bfI}{{\mathbf I}}
\renewcommand {\H}{\mathcal H}
\newcommand {\Id}{\operatorname{Id}}
\renewcommand {\sl}{\mathfrak{sl}}
\newcommand {\curtiscomment}[1]{
}
\newcommand{\BB}[1]{\left[#1\right]}
\begin{document}

\begin{abstract}
Let $\mathfrak{g}$ be a simple Lie algebra over the complex numbers, and let $\mathfrak{g}[u]$ denote its polynomial current algebra. In the mid--1980s, Drinfeld introduced the Yangian of $\mathfrak{g}$ as the unique solution to a quantization problem for a natural Lie bialgebra structure on $\mathfrak{g}[u]$. More precisely, Theorem 2 of [Dokl. Akad. Nauk SSSR \textbf{283} (1985), no.~5, 1060--1064] asserts that $\mathfrak{g}[u]$ admits a unique homogeneous quantization --- the Yangian of $\mathfrak{g}$ --- which is described explicitly via generators and relations, starting from a copy of $\mathfrak{g}$ and its adjoint representation.  Although the representation theory of Yangians has since undergone substantial development, a complete proof of Drinfeld's theorem has not appeared. 
In this article, we present a proof of the assertion that $\mathfrak{g}[u]$ admits at most one homogeneous quantization. Our argument combines cohomological and computational methods, and outputs a presentation of any such quantization using Drinfeld's generators and a reduced set of defining relations.
\end{abstract}

\maketitle

\setcounter{tocdepth}{1}
\tableofcontents

\setlength{\parskip}{5pt}

\section{Introduction}\label{sec:intro}

\subsection{}\label{ssec:intro-summary}
Let $\g$ be a simple Lie algebra over the field of complex numbers $\C$, and let $\g[u]$ be the Lie algebra of polynomial maps $\C\to \g$, with Lie bracket defined pointwise. The latter Lie algebra, called the (polynomial) current algebra of $\g$, is naturally graded and admits a Lie bialgebra structure with cobracket 
\begin{equation*}
\delta:\g[u]\to \g[u]\otimes \g[u]
\end{equation*}
which is obtained by using the residue and Killing forms on $\C[u^{\pm 1}]$ and $\g$, respectively, to identify the graded dual of $\g[u]$ with $u^{-1}\g[u^{-1}]$, and then setting $\delta$ to be the transpose of the natural Lie bracket on $u^{-1}\g[u^{-1}]$; see Section \ref{ssec:current}.

In Theorem 2 of Drinfeld's foundational paper \cite{Dr}, it is asserted that the Lie bialgebra $(\g[u],\delta)$ admits a unique \textit{homogeneous quantization}. That is, up to isomorphism, there is a unique graded Hopf algebra over the graded ring $\C[\hbar]$ that provides a flat deformation of the enveloping algebra $U(\g[u])$, and whose coproduct $\Delta$ recovers $\delta$ as its semiclassical limit; see Definition \ref{ssec:hgs-q-defn}.
Moreover, this unique quantization --- which Drinfeld called the \textit{Yangian} of $\g$ ---  is explicitly presented in this same theorem using generators and relations.
 The main goal of this article is to make available a proof of the uniqueness assertion from Drinfeld's theorem which, at the same time, explains how to derive a presentation of any such quantization.

  \subsection{Drinfeld's theorem}

In order to motivate and precisely formulate our main results, we begin by recalling the statement of Drinfeld's theorem. To this end,  let $\{x_\lambda\}_{\lambda=1}^{\dim \g}\subset \g$ be an orthonormal basis of $\g$ relative to a fixed non-degenerate, invariant, and symmetric bilinear form $(\cdot,\cdot)$ on $\g$.  Let $c_{\lambda\mu}^\nu=([x_\lambda,x_\mu],x_\nu)$ be the structure constants of $\g$ in this basis, so that 
$
[x_\lambda,x_\mu]=\sum_{\nu}c_{\lambda\mu}^\nu x_\nu.
$
The following theorem is a restatement of Theorem 2 from \cite{Dr}.
\begin{thm}\label{thm:Dr}
The Lie bialgebra $(\g[u],\delta)$ admits a unique homogeneous quantization
$\A$. As a unital associative $\C[\hbar]$-algebra, $\A$ is generated by elements $I_\lambda$
and $J_\lambda$, for $1\leq \lambda\leq\dim(\g)$, with defining relations
\begin{gather}\label{ieq:1}
[I_\lambda,I_\mu] = \sum_\nu c_{\lambda\mu}^\nu I_\nu,\qquad
[I_\lambda,J_\mu] = \sum_\nu c_{\lambda\mu}^\nu J_\nu,
\\ 
\begin{aligned}\label{ieq:2}
[J_\lambda,[J_\mu,I_\nu]] - [I_\lambda,[J_\mu,J_\nu]] &= \hbar^2 S_0(\lambda,\mu,\nu),\\
[[J_\lambda,J_\mu],[I_r,J_s]] - [[J_r,J_s],[I_\lambda,J_\mu]] &= \hbar^2 S_1(\lambda,\mu,r,s).
\end{aligned}
\end{gather}
Moreover, the grading on $\A$ is given by $\deg(I_\lambda)=0$ and $\deg(J_\lambda)=1$, while its coproduct $\Delta$ is uniquely determined by
\begin{equation}\label{ieq:3}
\Delta(I_\lambda) = \square(I_\lambda) \quad \text{ and }\quad
\Delta(J_\lambda) = \square(J_\lambda) + \frac{\hbar}{2} \sum_{\mu,\nu}
c_{\lambda\mu}^\nu I_\nu\otimes I_\mu,
\end{equation}
where we have set $\square(x) = x\otimes 1 + 1\otimes x$.
\end{thm}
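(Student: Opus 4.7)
The plan is to recover any homogeneous quantization $\A$ of $(\g[u],\delta)$ by constructing elements inside it that satisfy Drinfeld's relations, and then promoting the resulting surjection from the explicit Drinfeld-presented algebra to an isomorphism. I would proceed in four stages.

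\textbf{Stage 1 (generators).} Exploit the grading $\A=\bigoplus_{n\ge 0}\A_n$ with each $\A_n$ a free $\C[\hbar]$-module, and the graded identification $\A/\hbar\A\simeq U(\g[u])$. Choose $\C$-linear sections of the reduction in degrees $0$ and $1$ to lift $x_\lambda\in \g\subset U(\g[u])_0$ to $I_\lambda\in \A_0$ and $u x_\lambda\in u\g\subset U(\g[u])_1$ to $J_\lambda\in \A_1$. Since $\g\oplus u\g$ generates $\g[u]$ as a Lie algebra, a degree induction using flatness and the graded Nakayama lemma shows that $\{I_\lambda,J_\lambda\}$ generates $\A$ as a unital $\C[\hbar]$-algebra.

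\textbf{Stage 2 (low-order relations).} The discrepancy $[I_\lambda,I_\mu]-\sum_\nu c_{\lambda\mu}^\nu I_\nu$ lies in $\hbar\A_0$, and modulo each successive power of $\hbar$ defines a Lie-algebra $2$-cocycle for $\g$ with coefficients in $\A_0$. Whitehead's second lemma (applied to a filtration by powers of $\hbar$) kills this class, so a one-cochain modification of the initial lift enforces the first relation in \eqref{ieq:1} on the nose. A parallel argument using complete reducibility of finite-dimensional $\g$-modules adjusts $J_\lambda$ so that it spans a copy of the adjoint representation inside $\A_1$, yielding the second relation in \eqref{ieq:1}. The coproduct formula \eqref{ieq:3} is then forced by the requirement that $(\Delta-\Delta^{\mathrm{op}})/\hbar$ reduce to $\delta$ modulo $\hbar$, together with the explicit cobracket computed from the residue--Killing pairing on $\g[u]$.

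\textbf{Stage 3 (quantum relations).} Writing $E_0(\lambda,\mu,\nu)=[J_\lambda,[J_\mu,I_\nu]]-[I_\lambda,[J_\mu,J_\nu]]$ and the analogous $E_1$, a direct computation shows that the image of each $E_i$ in $U(\g[u])$ vanishes, so $E_i\in\hbar\A$. A second round of the same argument (using that $E_i/\hbar$, reduced mod $\hbar$, is again a primitive-like element killed by the classical limit because both sides of the classical identity vanish) shows $E_i\in\hbar^2\A$. Applying the constrained coproduct of Stage 2 to $E_i/\hbar^2$ and matching against the classical tensor invariants supported on $\g$ identifies $E_i$ with $\hbar^2 S_i$ up to the residual freedom in the choice of $J_\lambda$, at which point any remaining slack can be absorbed into a further invariant change of lift.

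\textbf{Stage 4 (sufficiency of the relations).} Let $\A_{\mathrm{Dr}}$ be the abstract algebra presented by the generators $I_\lambda,J_\lambda$ and relations \eqref{ieq:1}--\eqref{ieq:2}. Stages 1--3 furnish a surjective graded $\C[\hbar]$-algebra map $\A_{\mathrm{Dr}}\twoheadrightarrow \A$, and the remaining task is injectivity. By the graded Nakayama lemma this reduces to showing $\A_{\mathrm{Dr}}/\hbar\A_{\mathrm{Dr}}\iso U(\g[u])$; equivalently, that every relation in $\g[u]$ among iterated brackets of $\g$ and $u\g$ is a classical consequence of the reductions of Drinfeld's relations. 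This is where I expect the main obstacle. The natural tool is Chevalley--Eilenberg cohomology of $\g[u]$: the potential extra relations form classes in graded components of $H^2(\g[u],\g[u])$, and the potential obstructions to the quantization procedure itself sit in graded components of $H^3(\g[u],\g[u])$. The heart of the argument is a degree-by-degree analysis showing that, outside the finitely many low-degree pieces where Drinfeld's relations act, these cohomologies vanish. Organizing this inductively and controlling how the $\hbar$-filtration interacts with the current-algebra grading is the most delicate part of the proof.
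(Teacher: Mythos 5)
Your four-stage architecture is broadly the right one and matches the paper's in outline (lift degree-$0$ and degree-$1$ generators, correct the lifts, establish the quantum relations by a primitivity argument, and conclude by reducing mod $\hbar$ against a presentation of $\g[u]$), but Stage 2 contains a genuine gap. The coproduct formula \eqref{ieq:3} is \emph{not} forced by the requirement that $(\Delta-\Delta^{\mathrm{op}})/\hbar$ reduce to $\delta$: that axiom only controls the \emph{antisymmetric} part of $\hbar^{-1}\bigl(\Delta(J_\lambda)-\square(J_\lambda)\bigr)$ modulo $\hbar$. The symmetric part is an a priori nonzero map $\eta:\g\to U(\g)^{\otimes 2}$ which is a $2$-cocycle for the cobar (coalgebra) complex of $U(\g)$, and killing it requires Cartier's theorem on the vanishing of $H^{2}$ of the antisymmetric part of that complex. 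Worse, the $1$-cochain correcting the failure of $J$ to be a $\g$-intertwiner and the one correcting the coproduct must be the \emph{same} element $\varphi:\g\to U(\g)$; this simultaneous solvability is the crux of the argument and needs a double-complex combining Whitehead's first lemma (for $H^1$, not the second lemma) with Cartier's theorem. Complete reducibility alone, applied separately to each defect, does not deliver a single $J_\lambda$ satisfying both the second relation of \eqref{ieq:1} and \eqref{ieq:3}. (Also, since $\deg\hbar=1$, the discrepancy $[I_\lambda,I_\mu]-\sum_\nu c_{\lambda\mu}^\nu I_\nu$ lies in $\A_0\cap\hbar\A=\hbar\A_{-1}=0$, so your cocycle correction of the $I_\lambda$ is vacuous: $\A_0\cong U(\g)$ on the nose.)

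Stage 3 also has a gap: the claim that ``any remaining slack can be absorbed into a further invariant change of lift'' fails, because once \eqref{ieq:3} is imposed the only residual freedom is $J(x)\mapsto J(x)+\hbar\lambda x$ for a single scalar $\lambda$, which cannot absorb a general discrepancy. What actually closes the argument is: (i) the discrepancy is shown to be \emph{primitive} by a coproduct computation; (ii) $\operatorname{Prim}(\A)=\g\otimes\C[\hbar]$ --- which itself rests on the injectivity of $\delta$ on $\g u^{k}$ for $k\geq 1$, a point you never establish; and (iii) the resulting element of $\hbar^{2}\h$ (or $\hbar^{3}\h$) is killed by an explicit weight/eigenvalue computation. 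Moreover, pinning down the exact right-hand sides $\hbar^{2}S_i$ in \eqref{ieq:2} is more than is needed: the efficient route is to prove a \emph{smaller} set of relations (vanishing of $[J(t_i),J(t_j)]$ up to an explicit $\hbar^2$ correction for $\g\ncong\sl_2$, or a single cubic relation for $\sl_2$) and then show these already present the quantization, using a minimal presentation of $\g[u]$ by its degree-$0$ and degree-$1$ parts. For that last step, your Stage 4 framing in terms of $H^{2}(\g[u],\g[u])$ and $H^{3}(\g[u],\g[u])$ is not the right tool --- those groups govern deformations and obstructions, not whether a given set of relations presents $\g[u]$ --- and even if completed it would not yield the existence/flatness of the Drinfeld-presented algebra, which (as in the paper) must be imported from the PBW theorem for Yangians.
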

Note that the first equation of \eqref{ieq:1}  says exactly that the linear map $\iota:\g\to \A_0$ given by $\iota(x_\lambda)=I_\lambda$ is a Lie algebra homomorphism, and the second equation then says that $J:\g\to \A_1$ given by $J(x_\lambda)=J_\lambda$ is a $\g$-module intertwiner. 
 Thus, \eqref{ieq:1}
is equivalent to the assertion that $\A$ contains a copy of $U(\g)$ in degree $0$,
and a copy of the adjoint representation of $\g$ in degree $1$.

In \eqref{ieq:2}, $S_0(\lambda,\mu,\nu)$ and $S_1(\lambda,\mu,r,s)$ are certain explicitly given elements of
degrees $0$ and $1$ in $\A$, respectively. Their exact expressions will not be relevant to us, and hence have been omitted. In addition, we note that, according to Drinfeld \cite{Dr}, the first equation of \eqref{ieq:2} is redundant when $\g\cong \sl_2$, while the second is redundant when $\g\ncong \sl_2$. It is also stated in Example 3 of \cite[\S6]{DrQG} that both of these equations arise from the relations 	of \eqref{ieq:1} and that requirement that \eqref{ieq:3} determines an algebra homomorphism; see also the discussion preceding Definition 12.1.2 in \cite{CPBook}.

\subsection{Main results}\label{ssec:intro-main-thm}
As indicated in Section \ref{ssec:intro-summary}, the graded Hopf algebra $\A$ from Theorem \ref{thm:Dr} (or its specialization
at $\hbar=1$) is the well-known Yangian of $\g$, whose structure and
representation theory have been extensively studied over the last forty years --- we refer the reader to \cite[Ch.~12]{CPBook} for a survey of some of the foundational results in this area. However, to the best of our knowledge, a full proof
of Drinfeld's theorem has never appeared in the literature. 

In this article, we provide a proof of the uniqueness  statement of Theorem \ref{thm:Dr} which simultaneously  establishes that any homogeneous quantization of 
$(\g[u],\delta)$ must admit a presentation as in Theorem \ref{thm:Dr}, but with the relations from \eqref{ieq:2} replaced by a smaller set of equations. 
To state this precisely, let $\h$ be a Cartan subalgebra of $\g$ and, for each $h\in \h$, define
\begin{equation*}
\nu(h) = \frac{1}{2} \sum_{\alpha>0} \alpha(h)x^-_\alpha x^+_\alpha \in U(\g),
\end{equation*}
where the sum is taken over a choice of set of positive roots for $\g$ relative to $\h$, and $x^\pm_\alpha$ are root vectors for $\pm \alpha$ with $(x^+_\alpha,x^-_\alpha)=1$; see Sections \ref{ssec:Lie} and \ref{ssec:nu}. The following is the main result of this article. 

\begin{thm}\label{thm:main}
Let $\H$ be a homogeneous quantization of $(\g[u],\delta)$.
Then, as a unital associative $\C[\hbar]$-algebra, $\H$ is generated by elements $\iota(x)$ and $J(x)$, for $x\in\g$, subject to 
the following relations:

\begin{enumerate}
\item\label{main-1} $\iota:\g\to \H$ is a Lie algebra homomorphism and $J:\g\to \H$ is a $\g$-module homomorphism. That is, they are $\C$-linear and, for each $x,y\in \g$, one has
\begin{equation*}
\iota([x,y]) = [\iota(x),\iota(y)] \quad \text{ and }\quad J([x,y])=[\iota(x),J(y)].
\end{equation*}

\item\label{main-2} If $\g\ncong\sl_2$, then for each $h_1,h_2\in \h$, one has 
\begin{equation*}
[J(h_1),J(h_2)] =\hbar^2\iota([\nu(h_2),\nu(h_1)]).
\end{equation*}

\item\label{main-3}  If $\g\cong \sl_2$ and  $e,f,h$ form a fixed $\sl_2$-triple in $\g$, then
\begin{equation*}
\left[[J(e), J(f)], J(h)\right] = \hbar^2 (\iota(f)J(e)-J(f)\iota(e))\iota(h).
\end{equation*}

\end{enumerate}
Moreover, the grading on $\H$ is given by $\deg(\iota(x))=0$ and $\deg(J(x))=1$, while its coproduct $\Delta$ is uniquely determined by
\begin{equation}\label{ieq:main-D}
\Delta(\iota(x)) = \square(\iota(x)) \quad \text{ and }\quad
\Delta(J(x)) = \square(J(x)) + \frac{\hbar}{2} [\iota(x)\otimes 1,\Omega_\iota],
\end{equation}
where $\Omega_\iota=(\iota\otimes\iota)(\Omega)$ with $\Omega\in\mathrm{Sym}^2(\g)^\g$ the Casimir tensor of $\g$.
\end{thm}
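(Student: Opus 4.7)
The plan is to extract from $\H$ concrete lifts $\iota$ and $J$ of the canonical generators of $\g[u]$, verify that they satisfy relations (1)--(3), and show that the resulting presentation is complete.

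\textbf{Generators, relation (1), and generation.} Since $\H$ is $\hbar$-torsion-free with $\deg(\hbar)=1$ and $\H/\hbar\H\cong U(\g[u])$ as graded algebras, the degree-zero part is $\H_0\cong U(\g)$, defining $\iota:\g\hookrightarrow\H_0$. The short exact sequence of $\g$-modules $0\to\hbar\H_0\to\H_1\to U(\g[u])_1\to 0$ (with $\g$ acting adjointly through $\iota$) splits $\g$-equivariantly over the adjoint submodule $u\g\subset U(\g[u])_1$ by Whitehead's lemma, yielding a $\g$-equivariant lift $J:\g\to\H_1$ of $x\mapsto ux$; relation (1) then holds by construction. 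Since $\g+u\g$ generates $\g[u]$ as a Lie algebra (via $u^{n+1}\g\subseteq[u\g,u^n\g]$), the images of $\iota,J$ generate $\H/\hbar\H$, and graded Nakayama (induction on degree using $\hbar\cdot\H_{n-1}\subseteq\H_n$) propagates this to generation of $\H$ over $\C[\hbar]$.

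\textbf{Coproduct formula \eqref{ieq:main-D}.} By degree, $\Delta(\iota(x))\in\H_0^{\otimes 2}=U(\g)^{\otimes 2}$ contains no $\hbar$, so the semiclassical identity lifts to $\Delta(\iota(x))=\square(\iota(x))$ on the nose. Similarly $\Delta(J(x))=\square(J(x))+\hbar T(x)$ with $T(x)\in U(\g)^{\otimes 2}$, and the skew part $T(x)-T(x)^{\op}$ is pinned down by the Lie cobracket $\delta(ux)$ via $\hbar^{-1}(\Delta-\Delta^{\op})\equiv\delta\pmod{\hbar}$. Coassociativity of $\Delta$ imposes a cocycle condition on the symmetric remainder, and a cohomological computation in the cocommutative coalgebra $U(\g)$ shows it is a coboundary $\tilde\Delta(\phi(x))$ for some $\g$-equivariant $\phi:\g\to U(\g)$. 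Replacing $J$ by $J-\hbar\phi$ enforces \eqref{ieq:main-D} exactly.

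\textbf{Relations (2)--(3) and completeness.} For $\g\not\cong\sl_2$ and $h_1,h_2\in\h$, the commutator $[J(h_1),J(h_2)]$ reduces modulo $\hbar$ to $[uh_1,uh_2]=0$, so it lies in $\hbar\H_1$. Expanding $\Delta([J(h_1),J(h_2)])$ via \eqref{ieq:main-D} forces the image of $\hbar^{-1}[J(h_1),J(h_2)]$ in $U(\g[u])_1$ to be primitive; a bracketing argument using multiple independent root directions (available for $\g\not\cong\sl_2$) shows this image vanishes, placing $[J(h_1),J(h_2)]$ in $\hbar^2 U(\g)$. A further coassociativity computation --- matching the quadratic terms $[[\iota(h_i)\otimes 1,\Omega_\iota],\,\cdot\,]$ against the root-space expansion of $\Omega$ --- identifies the resulting element as $\hbar^2\iota([\nu(h_2),\nu(h_1)])$, with $\nu(h)$ emerging as the natural Cartan-weight coefficient extracted from $[\iota(h)\otimes 1,\Omega_\iota]$. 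The $\sl_2$ case applies the same template to the triple bracket $[[J(e),J(f)],J(h)]$, whose semiclassical image $[u^2h,uh]$ vanishes, yielding relation (3). Finally, let $\mathcal{B}$ denote the $\C[\hbar]$-algebra with this abstract presentation; the above provides a graded surjection $\mathcal{B}\twoheadrightarrow\H$, which becomes an isomorphism upon verifying (using the presentation alone) that $\mathcal{B}/\hbar\mathcal{B}\cong U(\g[u])$ and then invoking graded Nakayama on the kernel.

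\textbf{Main obstacle.} The decisive computational step is the extraction of $\nu(h)$ in relation (2): it requires a careful coassociativity calculation matched against explicit root-theoretic identities in $U(\g)$, together with the analogous cubic identification for $\sl_2$ in relation (3). A secondary obstacle is the PBW-type verification that the abstract presentation alone implies $\mathcal{B}/\hbar\mathcal{B}\cong U(\g[u])$.
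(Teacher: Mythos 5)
Your overall architecture matches the paper's: a cohomological construction of the equivariant lift $J$ with the prescribed coproduct (Whitehead's lemma for equivariance, Cartier's computation of the cobar cohomology of $U(\g)$ for the symmetric part of the coproduct defect), followed by a primitivity analysis of the relations, and finally the minimal presentation of $\g[u]$ (Theorem \ref{thm:current-min}) together with torsion-freeness to upgrade the surjection from the presented algebra to an isomorphism. The ordering of the Whitehead and Cartier steps differs from the paper's double-complex bookkeeping, but is logically equivalent.

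The one genuine gap is in your treatment of relations \eqref{main-2} and \eqref{main-3}. A coproduct or coassociativity computation can never ``identify'' an element of $\H$ outright: it determines it only up to a primitive element, since primitives have vanishing reduced coproduct. What the coproduct calculation actually yields --- via part \eqref{GNW:3} of Lemma \ref{lem:GNW}, which is how $\nu$ really enters (as a classical identity in $U(\g)$, not as a weight coefficient read off from $[\iota(h)\otimes 1,\Omega_\iota]$) --- is that $[\J(h_1),\J(h_2)]+\hbar^2[\nu(h_1),\nu(h_2)]$ is primitive, hence by Proposition \ref{pr:prim} and weight considerations lies in $\hbar^2\h$; likewise the $\sl_2$ cubic difference lands in $\hbar^3\,\C h$. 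Killing these residues is a separate and substantive step: for rank at least two it is the bracketing argument of Lemma \ref{L:T} and Proposition \ref{T:T(h)-commute}, which needs the auxiliary elements $x_{i,1}^{\pm}$, $\xi_{i,1}$ and the existence of $i\neq j$; for $\sl_2$ it is the eigenvalue argument with $T=\ad(f)\circ\ad(e)$ at the end of Section \ref{ssec:sl2-2}, where the residue would have to have eigenvalue both $2$ and $6$. Your outline invokes a ``bracketing argument with multiple root directions'' only at the earlier stage of showing $[J(h_1),J(h_2)]\in\hbar^2 U(\g)$ --- where it is not needed, since that step already follows from the cobracket axiom exactly as in the proof of Proposition \ref{pr:prim} --- and omits it where it is indispensable. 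Without it, your argument only proves $[J(h_1),J(h_2)]=\hbar^2\iota([\nu(h_2),\nu(h_1)])+\hbar^2 g(h_1,h_2)$ for an undetermined antisymmetric $g:\h\wedge\h\to\h$ (and the analogous $\hbar^3 a$ ambiguity for $\sl_2$), which is not enough to pin down the presentation.
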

Here it is understood that the Lie algebra homomorphism $\iota:\g\to \H$ from \eqref{main-1} has been extended to a $\C$-algebra homomorphism $U(\g)\to \H$ in \eqref{main-2}. 

Although not needed in this article, we note that the relations of \eqref{main-2} and \eqref{main-3} are known to follow from the equations of Theorem \ref{thm:Dr}. For instance, this has been established in the proof of Theorem 2.6 in \cite{GRWEquiv}; we refer the reader to (2.20) therein and Step 1 of \cite[\S A]{GRWEquiv} for further details. 

\subsection{Outline of proof}\label{ssec:intro-proof}
Our proof of Theorem \ref{thm:main} is divided into two main parts. In the first part, we establish 
that if $\H$ is any homogeneous quantization of $(\g[u],\delta)$, then there 
exists a $\g$-module homomorphism $J:\g\to\H_1$ as in \eqref{main-1} of Theorem \ref{thm:main}, which satisfies the coproduct formula \eqref{ieq:main-D}. This result is stated in Theorem \ref{thm:gens}, which provides the main conclusion of Section \ref{sec:hgs-gens}. Its proof is based on a cohomological argument
using Whitehead's first lemma and Cartier's computation of the cohomology of the coalgebra complex associated to $U(\g)$; see Sections \ref{ssec:coalg} and \ref{ssec:CE}. 

The second part is computational, and focuses on establishing that the relations of Theorem \ref{thm:main} necessarily hold in $\H$. This is achieved in Section \ref{sec:hgs-rels} by proving that the difference between the left and right-hand sides of the identities in \eqref{main-2} and \eqref{main-3}  must be primitive elements in $\H$, and therefore belong to $\hbar^2\h$ and $\hbar^3\h$, respectively, by Proposition \ref{pr:prim}. These differences are then shown to be identically zero in Sections \ref{ssec:Nsl2-3} and \ref{ssec:sl2-2}. We refer the reader to Theorem \ref{thm:relns}, which provides the main result of Section \ref{sec:hgs-rels}, for further details. 

These two main ingredients are then combined in Section \ref{sec:hgs-unique} to give a proof of Theorem \ref{thm:main}, which is restated in an equivalent form in Theorem \ref{thm:uniqueness}. Its proof also hinges on the fact that the relations \eqref{main-1}--\eqref{main-3} of Theorem \ref{thm:main} deform a set of defining relations for the current algebra $\g[u]$, which is a consequence of Theorem \ref{thm:current-min}.

\subsection{Remarks}\label{ssec:intro-remarks}
We end this introduction by emphasizing that Theorem \ref{thm:main} does \textit{not} claim to provide a proof of the existence of a homogeneous quantization of $(\g[u],\delta)$, as it does not assert that the algebra defined by the relations \eqref{main-1}--\eqref{main-3} is such a quantization, with coproduct given by \eqref{ieq:main-D}. However, this is in fact the case, and already follows from existing results in the literature on Yangians. Crucially, the graded algebra  defined by the relations of the theorem is nothing but the Yangian $\Yhg$, expressed in the minimalistic presentation obtained in \cite[Thm.~2.13]{GNW} by the second author in joint work with Guay and Nakajima, based on an earlier work of Levendorskii \cite{Lev-Gen}. In Section \ref{ssec:existence}, we will elaborate on this point and briefly survey the results that collectively imply that the Yangian, presented as in Theorem \ref{thm:main} and \cite{GNW}, is a homogeneous quantization of $(\g[u],\delta)$.

\subsection{Acknowledgments}\label{ssec:acknowledge}

The second author gratefully acknowledges the support of the Natural Sciences and Engineering Research Council of Canada (NSERC), provided via the Discovery Grants Program (Grant RGPIN-2022-03298 and DGECR-2022-00440).

\section{Simple Lie algebras and current algebras}\label{sec:defns}

In this preliminary section, we recall the key properties of the classical structures underpinning this article: the Lie algebra $\g$ and its polynomial current algebra $\g[u]$. 

\subsection{Simple Lie algebras}\label{ssec:Lie}
Let $\g$ be a finite-dimensional simple Lie algebra over $\mathbb{C}$. 
Let $\h \subset \g$ be a Cartan subalgebra and let $R\subset\h^*\setminus\{0\}$ be the set of roots associated to
the pair $(\g,\h)$. We will write $\g_\alpha\subset \g$ for the root space associated to an arbitrary root $\alpha\in R$. Let $\{\alpha_i\}_{i\in \bfI} \subset R$ be a base of simple roots, and let $R_+\subset R$
be the corresponding set of positive roots. We fix a non-degenerate, invariant, symmetric bilinear form $(\cdot ,\cdot)$ on $\g$ and, for each $i\in \bfI$, choose elements $x_i^{\pm}\in \g_{\pm \alpha_i}$ satisfying $(x_i^+,x_i^-)=1$. Note that the Cartan elements $\{t_i\}_{i\in \bfI}\subset \h$ defined by 
\[
t_i:=[x_i^{+},x_i^{-}]
\]
then satisfy $(h,t_i) = \alpha_i(h)$ for each $h\in \h$. In the case where $\g\cong\mathfrak{sl}_2$, we will simply write $e=x_i^+$, $f=x_i^-$ and $h=t_i$, where $\bfI=\{i\}$. 

Let $\Omega \in \mathrm{Sym}^2(\g)^{\g}$ denote the Casimir tensor of the bilinear form $(\cdot,\cdot)$, so that
\[
\Omega = \sum_{i\in\bfI} t_i\otimes \varpi_i^{\vee} + \sum_{\alpha\in R_+} (x^+_\alpha\otimes x^-_{\alpha}
+ x^-_{\alpha}\otimes x^+_\alpha),
\]
where $\{\varpi_i^{\vee}\}_{i\in\bfI}\subset\h^*$ is the basis of fundamental coweights, and
$x^{\pm}_{\alpha}\in \g_{\pm\alpha}$ are chosen so that $(x^+_\alpha, x^-_\alpha)=1$.

\subsection{The linear map $\nu:\h\to U(\g)^{\h}$}\label{ssec:nu}
For later purposes, we introduce a linear map $\nu:\h\to U(\g)^{\h}$ by setting
\[
\nu(h) := \frac{1}{2} \sum_{\alpha\in R_+} \alpha(h) x^-_\alpha x^+_\alpha \quad \forall\; h\in \h.
\]
In addition, we define auxiliary elements $\{w_i^{\pm}\}_{i\in \bfI}\subset U(\g)$ by the formula
\begin{equation*}
 w_i^{\pm} := \pm\frac{1}{(\alpha_i,\alpha_i)} [\nu(t_i),x_i^\pm] \quad \forall \; i\in \bfI.
\end{equation*}
The main properties of these elements, stated below, were proved in
\cite{GNW}; see Lemma 3.9 of {\em loc. cit.} for \eqref{GNW:1} and \eqref{GNW:2},
and \S 4.4 (pages 890--892) for \eqref{GNW:3}.
\begin{lem}\label{lem:GNW}
The following relations hold in $U(\g)$.

\begin{enumerate}
\item\label{GNW:1} For every $h\in\h$ and $i\in\bfI$, we have
$[\nu(h),x_i^{\pm}] = \pm \alpha_i(h) w_i^{\pm}$.

\item\label{GNW:2} For every $i,j\in\bfI$, we have
$[w_i^+,x_j^-] = [x_i^+,w_j^-] = \delta_{ij} \lp \nu(t_i) - \frac{1}{2} t_i^2 \rp$.

\item\label{GNW:3} For every $h_1,h_2\in\h$, we have
\[
\Delta([\nu(h_1),\nu(h_2)]) = \square \lp[\nu(h_1),\nu(h_2)]\rp - \frac{1}{4}
\left[\left[ h_1\otimes 1, \Omega\right], \left[ h_2\otimes 1, \Omega\right]\right]
\]
where $\square(y) = y\otimes 1 + 1\otimes y$ and  $\Delta:U(\g)\to
U(\g)^{\otimes 2}$ is the coproduct on $U(\g)$, uniquely determined by $\Delta(x) = \square(x)$ for all $x\in \g$.
\end{enumerate}
\end{lem}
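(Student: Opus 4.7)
The plan is to establish the three parts of the lemma by direct computation in $U(\g)$, following \cite[Lem.~3.9 and \S 4.4]{GNW}. The assertions escalate in complexity, with each part leaning on its predecessors.

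For \eqref{GNW:1}, focus on the $+$ case; the $-$ case is analogous. Both sides are $\C$-linear in $h$, and they agree at $h = t_i$ by the very definition of $w_i^+$, so the content is that the map $h \mapsto [\nu(h), x_i^+]$ factors through $\alpha_i \in \h^*$. Expanding
\begin{equation*}
[\nu(h), x_i^+] = \tfrac{1}{2}\sum_{\alpha > 0} \alpha(h) \bigl(x_\alpha^- [x_\alpha^+, x_i^+] + [x_\alpha^-, x_i^+] x_\alpha^+\bigr),
\end{equation*}
the term with $\alpha = \alpha_i$ reduces to $-\alpha_i(h) t_i x_i^+$, and the remaining contributions pair up according to whether $\alpha + \alpha_i$ or $\alpha - \alpha_i$ is a positive root. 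The identities among Chevalley structure constants arising from the Jacobi identity in $\g$ then force each such pair to combine into $\alpha_i(h)$ times a fixed element of $U(\g)_{\alpha_i}$.

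For \eqref{GNW:2}, the key observation is that \eqref{GNW:1} together with the Jacobi identity yields, for all $i, j \in \bfI$, the twin identities
\begin{equation*}
(\alpha_j, \alpha_j)[x_i^+, w_j^-] = (\alpha_i, \alpha_j)[w_i^+, x_j^-] \aand (\alpha_i, \alpha_i)[w_i^+, x_j^-] = (\alpha_i, \alpha_j)[x_i^+, w_j^-].
\end{equation*}
When $i \neq j$, these force both commutators to vanish: either $(\alpha_i, \alpha_j) = 0$, in which case they vanish directly, or composing the two identities gives $\bigl(1 - \tfrac{(\alpha_i, \alpha_j)^2}{(\alpha_i, \alpha_i)(\alpha_j, \alpha_j)}\bigr)[w_i^+, x_j^-] = 0$, whose coefficient is nonzero by strict Cauchy--Schwarz since $\alpha_i \neq \pm \alpha_j$. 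For $i = j$, Jacobi again gives $[w_i^+, x_i^-] = [x_i^+, w_i^-]$, and this common value is identified with $\nu(t_i) - \tfrac{1}{2} t_i^2$ by expanding $[\nu(t_i), x_i^-]$ via the root space decomposition and simplifying.

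For \eqref{GNW:3}, the starting point is to compute $\Delta(\nu(h))$ explicitly. Since each element of $\g$ is primitive in $U(\g)$, one finds $\Delta(\nu(h)) = \square(\nu(h)) + T(h)$ with $T(h) = \tfrac{1}{2}\sum_{\alpha > 0}\alpha(h)(x_\alpha^+ \otimes x_\alpha^- + x_\alpha^- \otimes x_\alpha^+)$. Substituting into $\Delta([\nu(h_1), \nu(h_2)]) = [\Delta(\nu(h_1)), \Delta(\nu(h_2))]$ and applying the Leibniz rule produces $\square([\nu(h_1), \nu(h_2)])$ together with three cross commutators $[\square(\nu(h_1)), T(h_2)]$, $[T(h_1), \square(\nu(h_2))]$, and $[T(h_1), T(h_2)]$. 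The first two can be evaluated termwise using \eqref{GNW:1}, while the third is a purely tensorial manipulation in $U(\g)^{\otimes 2}$. The main obstacle is matching the total with $-\tfrac{1}{4}[[h_1 \otimes 1, \Omega], [h_2 \otimes 1, \Omega]]$: this requires careful bookkeeping of many terms organized by weight, though each individual step is mechanical.
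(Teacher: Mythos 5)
The paper itself does not prove this lemma; it imports it wholesale from \cite{GNW} (Lemma 3.9 of \emph{loc.~cit.} for \eqref{GNW:1}--\eqref{GNW:2}, and \S4.4, pp.~890--892, for \eqref{GNW:3}), so your attempt is really being compared against that reference. Your arguments for \eqref{GNW:1} and \eqref{GNW:2} are sound: in \eqref{GNW:1} the pairing works because invariance of the form gives $[x^-_{\alpha+\alpha_i},x_i^+]=-N_{\alpha,\alpha_i}\,x^-_\alpha$ whenever $[x_\alpha^+,x_i^+]=N_{\alpha,\alpha_i}\,x^+_{\alpha+\alpha_i}$, so the coefficients $\alpha(h)$ and $(\alpha+\alpha_i)(h)$ combine to $-\alpha_i(h)$; note this uses that $\alpha_i$ is \emph{simple}, so that $\alpha-\alpha_i\in R$ forces $\alpha-\alpha_i\in R_+$ and every surviving monomial has the shape $x_\gamma^-x_{\gamma+\alpha_i}^+$. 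The twin identities in \eqref{GNW:2} and the Cauchy--Schwarz step for $i\neq j$ are correct, and the $i=j$ case is a finite computation with the $\alpha_i$-string structure constants.

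The genuine gap is in \eqref{GNW:3}, where you propose to evaluate $[\square(\nu(h_1)),T(h_2)]$ and $[T(h_1),\square(\nu(h_2))]$ ``termwise using \eqref{GNW:1}''. The sum defining $T(h_2)$ runs over \emph{all} positive roots, while \eqref{GNW:1} only controls $[\nu(h),x_\beta^\pm]$ for $\beta$ simple; for non-simple $\beta$ the analogous identity $[\nu(h),x_\beta^\pm]=\pm\beta(h)w_\beta^\pm$ is \emph{false}. For instance, in $\sl_3$ with $\theta=\alpha_1+\alpha_2$ one finds $[\nu(h),x_\theta^+]=\tfrac12\bigl(c_1\alpha_1(h)+c_2\alpha_2(h)\bigr)x_1^+x_2^+ + (\cdots)x_\theta^+$ with $c_1\neq c_2$ in general, which does not factor through $\theta(h)$. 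Worse, if such a formula did hold for every $\beta\in R_+$, each cross-commutator would be symmetric under $h_1\leftrightarrow h_2$, the two would cancel, and \eqref{GNW:3} would collapse to $[T(h_1),T(h_2)]=-\tfrac14[[h_1\otimes 1,\Omega],[h_2\otimes 1,\Omega]]$ --- which already fails for $\sl_3$, since the difference of the two sides contains the nonzero term $\tfrac12\bigl(\alpha_1(h_1)\alpha_2(h_2)-\alpha_2(h_1)\alpha_1(h_2)\bigr)\bigl([x_1^+\otimes x_1^-,x_2^+\otimes x_2^-]+[x_1^-\otimes x_1^+,x_2^-\otimes x_2^+]\bigr)$. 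So the entire content of \eqref{GNW:3} lives in the non-simple-root contributions to the cross terms and their cancellation against $[T(h_1),T(h_2)]$, exactly the part your plan does not address: you must expand $[\nu(h_1),x_\beta^\pm]$ for arbitrary $\beta\in R_+$ directly in structure constants and perform the weight-by-weight cancellation, which is what occupies the several pages of \cite[\S4.4]{GNW}.
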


\subsection{The polynomial current Lie bialgebra}\label{ssec:current}

Let $\g[u]=\g\otimes \C[u]$ denote the Lie algebra of polynomials in a single variable $u$ with coefficients in $\g$. 
This is an $\mathbb{N}$-graded Lie algebra over $\C$,
with $n$-th homogeneous component $\g u^n=\g\otimes \C u^n$ and Lie bracket determined by
\begin{equation*}
[x u^n,y u^m]=[x,y] u^{n+m} \quad \forall\; x,y\in \g \; \text{ and }\; n,m\geq 0.
\end{equation*}
Moreover, the formula 
\begin{equation*}
\delta(xu^n)=\sum_{a+b=n-1} [x\otimes 1, \Omega]u^a v^b\in (\g\otimes \g)[u,v]\cong \g[u]\otimes \g[u],
\end{equation*}
for all $x\in \g$ and $n\geq 0$, defines a degree $-1$ Lie cobracket $\delta$ on $\g[u]$, endowing it with the structure of 
an $\mathbb{N}$-graded Lie bialgebra. Equivalently, the above formula defines a graded linear map $\delta:\g[u]\to \g[u]\wedge \g[u]$  of degree $-1$ which satisfies the cocycle and co-Jacobi identities 
\begin{gather*}
\delta([f(u),g(u)])=[\delta(f(u)),\Delta(g(u))]+[\Delta(f(u)),\delta(g(u))], \\ 
(\Id +(1\,2\,3)+(1\,3\,2))\circ (\delta\otimes \Id) \circ \delta=0, 
\end{gather*}
for all $f(u),g(u)\in \g[u]$, where in the second relation the symmetric group on three letters acts on $\g[u]^{\otimes 3}$ by permuting its tensor factors.  

This Lie bialgebra structure arises naturally from the residue form on the loop algebra $\g[u,u^{-1}]$ given by
\begin{equation*}
\langle xu^n, yu^m\rangle = -(x,y)\mathrm{Res}_t(t^{m+n})=-\delta_{m+n,-1}(x,y)
\end{equation*}
for all $x,y\in \g$ and $n,m\in \Z$. This form identifies the Lie subalgebra $u^{-1}\g[u^{-1}]\subset \g[u,u^{-1}]$ with the graded dual of $\g[u]$, and the Lie cobracket $\delta$ on $\g[u]$ described above is just the transpose of the Lie bracket on $u^{-1}\g[u^{-1}]$. We refer the reader to \cite[\S3]{DrQG} or \cite[\S2.5]{WRQD}, for instance, for further details. In addition, we note that the Lie cobracket $\delta$ can be expressed in terms of the Casimir tensor $\Omega$ of $\g$ by
\begin{equation*}
\delta(f)(u,v)=\left[f(u)\otimes 1 + 1\otimes f(v), \frac{\Omega}{u-v}\right] \quad \forall \quad f(u)\in \g[u].
\end{equation*}

\subsection{A minimal presentation of \texorpdfstring{$\g[u]$}{g[u]}}\label{ssec:current-min}

To prove the main result of this article, we will make use of a presentation of $\g[u]$ given in terms of its degree $0$ and degree $1$ generators. In this subsection, we record  this realization of $\g[u]$. 

Let $\mathfrak{a}$ be the Lie algebra over $\C$ with generators  $\imath(x)$ and $G(x)$, for each $x\in \g$, and the following defining relations:
\begin{enumerate}
\item\label{g[u]-min:1} $\imath: \g \to \mathfrak{a}$ is a Lie algebra map and $G: \g \to \mathfrak{a}$ is a $\g$-module homomorphism.
That is, for all $x,y\in\g$ and $\lambda,\mu\in\C$, we have:
\begin{equation*}
\begin{aligned}
\imath(\lambda x + \mu y) = \lambda \imath(x) + \mu \imath(y), &\quad& \imath([x,y]) = [\imath(x),\imath(y)], \\
G(\lambda x + \mu y) = \lambda G(x) + \mu G(y), &\quad& G([x,y]) = [\imath(x),G(y)].
\end{aligned}
\end{equation*}

\item\label{g[u]-min:2} For each $i,j\in \bfI$, one has
\begin{align*}
[G(t_i),G(t_j)]&=0 &&\text{ if }\quad \g\ncong\sl_2 \\
\left[[G(e), G(f)], G(h)\right] &= 0  &&\text{ if }\quad \g\cong\sl_2
\end{align*}
\end{enumerate}
It is clear from this definition that $\mathfrak{a}$ admits an $\N$-graded Lie algebra structure, with grading uniquely determined by $\deg \imath(x)=0$ and $\deg G(x)=1$ for all $x\in \g$.  Moreover, the above definition is such that the relations imposed on $\imath(x)$ and $G(x)$ are satisfied by the elements $x$ and $xu$ in $\g[u]$, respectively. In fact, we have the following theorem.

\begin{thm}\label{thm:current-min}
The assignment $\psi$ defined by
\begin{equation*}
\psi(\imath(x))= x \quad \text{ and }\quad \psi(G(x))=xu \quad \forall\; x\in \g
\end{equation*}
uniquely extends to an isomorphism of $\N$-graded Lie algebras $\psi:\mathfrak{a}\to\g[u]$. 
\end{thm}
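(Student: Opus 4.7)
The argument proceeds in three stages: well-definedness of $\psi$, surjectivity, and injectivity. Well-definedness reduces to checking that the images $x$ and $xu$ in $\g[u]$ satisfy the relations \ref{g[u]-min:1} and \ref{g[u]-min:2}. The former is immediate from $\g\hookrightarrow\g[u]$ being a subalgebra and $[x,yu]=[x,y]u$, while the latter computes to $[t_iu,t_ju]=[t_i,t_j]u^2=0$ in the non-$\sl_2$ case (since $\h$ is abelian) and $[[eu,fu],hu]=[h,h]u^3=0$ for $\sl_2$. Surjectivity follows from the perfectness of $\g$: the images $\imath(\g)$ and $G(\g)$ generate $\g\oplus\g u$, and iterated brackets produce all of $\g u^n$ for $n\geq 0$.

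For injectivity, observe that $\mathfrak{a}$ admits a natural $\N$-grading (with $\deg\imath(x)=0$ and $\deg G(x)=1$) preserved by $\psi$, so it suffices to show $\psi_n:\mathfrak{a}_n\to\g u^n$ is an isomorphism for each $n$ by induction. The cases $n=0,1$ follow immediately from \ref{g[u]-min:1}: $\mathfrak{a}_0=\imath(\g)$ and $\mathfrak{a}_1=G(\g)$, and both maps are injective because $\psi$ provides one-sided inverses. For $n\geq 2$, one verifies using Jacobi and the perfectness of $\g$ that $\mathfrak{a}_n=[\mathfrak{a}_1,\mathfrak{a}_{n-1}]$, yielding a $\g$-equivariant surjection $\beta_n:\mathfrak{a}_1\otimes\mathfrak{a}_{n-1}\twoheadrightarrow\mathfrak{a}_n$. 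Under the inductive identification of the source with $\g\otimes\g$, the composition $\psi_n\circ\beta_n$ factors through the Lie bracket $[\cdot,\cdot]:\g\otimes\g\to\g$, so it suffices to show that $\beta_n$ itself factors through this bracket, whence $\psi_n$ becomes an isomorphism.

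The crucial case is $n=2$. For $\g\ncong\sl_2$, antisymmetry makes $\beta_2$ descend to a $\g$-equivariant map $\Lambda^2\g\to\mathfrak{a}_2$ whose kernel contains $\Lambda^2\h$ by \ref{g[u]-min:2}, and the main obstacle is to verify that the $\g$-submodule of $\Lambda^2\g$ generated by $\Lambda^2\h$ coincides with $\ker([\cdot,\cdot]:\Lambda^2\g\to\g)$. I would approach this by a weight-space analysis: the zero weight space of $\ker([\cdot,\cdot])$ decomposes as $\Lambda^2\h\oplus L$, where $L\subset\bigoplus_{\alpha\in R_+}\C(x^+_\alpha\wedge x^-_\alpha)$ is the kernel of $x^+_\alpha\wedge x^-_\alpha\mapsto t_\alpha$, and one must argue that every irreducible summand of $\ker([\cdot,\cdot])$ meets $\Lambda^2\h$ nontrivially through its zero weight space, either via a uniform argument or by reduction to rank-two subsystems. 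For $\g\cong\sl_2$ the situation differs: $\Lambda^2\sl_2\cong\sl_2$ is irreducible as $\sl_2$-module, so $\mathfrak{a}_2$ is automatically either $0$ or $\sl_2$, and surjectivity of $\psi$ forces $\mathfrak{a}_2\cong\sl_2$. The $\sl_2$ version of \ref{g[u]-min:2} is then used in degree $3$: writing $\mathfrak{a}_1\otimes\mathfrak{a}_2\cong\sl_2\otimes\sl_2=V_0\oplus V_2\oplus V_4$, the relation $[[G(e),G(f)],G(h)]=0$ corresponds to the element $h\otimes h$ in the source, and repeated application of $\ad(\sl_2)$ shows the $\g$-submodule it generates is precisely $V_0\oplus V_4$, forcing $\beta_3$ to kill this summand and leaving $\mathfrak{a}_3$ as a quotient of $V_2\cong\sl_2$.

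For the remaining inductive steps $n\geq 3$ (resp.\ $n\geq 4$ for $\sl_2$), the Jacobi identity and the already-established base-case identifications propagate the statement: any element of $\ker([\cdot,\cdot]:\g\otimes\g\to\g)$, viewed in $\mathfrak{a}_1\otimes\mathfrak{a}_{n-1}$ via the induction hypothesis, can be rewritten by Jacobi and the relations $[G(x),G(y)]=\beta_2(x\wedge y)$ (resp.\ the degree-$3$ $\sl_2$ identity) as an expression of strictly lower complexity that vanishes. I expect the main obstacle of the proof to be the representation-theoretic identification at $n=2$ for $\g\ncong\sl_2$, as all the higher-degree inductive steps are essentially formal consequences of Jacobi and this base case.
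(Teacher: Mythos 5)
The paper does not actually prove this theorem: it observes in the remark following the statement that the result is the $\hbar=0$ specialization of the presentations of $\Yhg$ established in \cite{Lev-Gen} and \cite[Thm.~2.13]{GNW}, and defers to those references. Your direct, self-contained approach is therefore a genuinely different route, and its overall architecture --- well-definedness, surjectivity from perfectness of $\g$, and injectivity by graded induction with the crux concentrated in degree $2$ (degree $3$ for $\sl_2$) --- is exactly the right one; it mirrors what the cited works do once $\hbar$ is set to zero. Your treatment of $\sl_2$ is essentially complete in degrees $\leq 3$: $\Lambda^2\sl_2\cong\sl_2$ is irreducible, and the submodule of $\sl_2\otimes\sl_2$ generated by $h\otimes h$ is indeed $V_0\oplus V_4=\ker([\cdot,\cdot])$.

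There is, however, a genuine gap at the step you yourself identify as the main obstacle, and it is not a removable formality. For $\operatorname{rk}(\g)\geq 2$ the entire theorem in degree $2$ is \emph{equivalent} to the assertion that the $\g$-submodule of $\Lambda^2\g$ generated by $\Lambda^2\h$ is all of $\ker([\cdot,\cdot]\colon\Lambda^2\g\to\g)$, and you do not prove this; you only sketch a strategy. Moreover, the sufficiency criterion you propose --- that every irreducible summand of the kernel meet $\Lambda^2\h$ nontrivially in its zero weight space --- is necessary but not sufficient as stated: if an irreducible constituent occurs with multiplicity in $\ker([\cdot,\cdot])$, a subspace can project nontrivially onto each copy while generating only a diagonal copy, so one must control the full isotypic components (or argue, as in \cite{GNW}, by explicit computations in rank-two subalgebras). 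A cleaner reduction you could exploit: since all weights of $\Lambda^2\g$ lie in the root lattice, $\ker([\cdot,\cdot])$ is generated by its zero weight space $\Lambda^2\h\oplus L$, where $L=\{\sum_\alpha c_\alpha\, x_\alpha^+\wedge x_\alpha^-:\sum_\alpha c_\alpha t_\alpha=0\}$, so the whole problem is to show $L\subseteq U(\g)\cdot\Lambda^2\h$; note that the naive attempt $x_\beta^- x_\beta^+\cdot(h_1\wedge h_2)$ produces no $x_\beta^+\wedge x_\beta^-$ term, so mixed root pairs are genuinely needed. Finally, the claim that the inductive steps $n\geq 3$ are ``essentially formal consequences of Jacobi'' is asserted rather than argued; the Jacobi identity gives you a cocycle relation $\phi(x,[y,z])=\phi([x,y],z)+\phi(y,[x,z])$ for $\phi(x,y)=[G(x),G_{n-1}(y)]$, but deducing from this that $\phi$ factors through the bracket still requires the degree-$2$ (resp.\ degree-$3$) input and a further weight or symmetry argument. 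Until the degree-$2$ identification is actually established, the proof is incomplete.
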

\begin{rem}
When $\g\cong \sl_2$, the presentation of $\g[u]$ described by this theorem (namely, that provided by $\mathfrak{a}$) is exactly that obtained from the presentation of the Yangian $\Yhg$ of $\g$ established in Theorem  1.2 of \cite{Lev-Gen} by reducing modulo $\hbar$ and using that $\Yhg/\hbar \Yhg\cong U(\g[u])$. For $\g\ncong \sl_2$, this same observation holds with \cite[Thm.~1.2]{Lev-Gen}  replaced by Theorem 2.13 of \cite{GNW}, which provides a strengthening of the former result under the assumption that $\g$ is not of rank $1$; see also Sections \ref{ssec:yangian} and \ref{ssec:existence}. Thus, Theorem \ref{thm:current-min} is a consequence of the results of \cite{Lev-Gen,GNW}. However, it is worth pointing out that these results for $\Yhg$ are much more complicated than Theorem \ref{thm:current-min} --- the arguments of \cite{Lev-Gen} and \cite{GNW} become much simpler when $\hbar$ is specialized to zero. 
\end{rem}

Henceforth, we shall identify $\mathfrak{a}$ and $\g[u]$ without further explanation. Note that, with respect to this realization, the Lie cobracket $\delta$ on $\g[u]$ from Section \ref{ssec:current} satisfies 
\begin{equation*}
\delta(\imath(x))=0 \quad \text{ and }\quad \delta(G(x))=[x\otimes 1,\Omega] \quad \forall\; x\in \g.
\end{equation*}

\section{Homogeneous quantization: generators}\label{sec:hgs-gens}

The goal of this section is twofold. First, we review the definition and basic properties of homogeneous quantizations of graded Lie bialgebras, with particular focus on the Lie bialgebra $(\g[u],\delta)$; see Sections \ref{ssec:hgs-q-defn} and \ref{ssec:prim}. Second, we carry out the first part of the proof of Theorem \ref{thm:main}, as outlined in Section \ref{ssec:intro-proof}. 

The main result of the section is Theorem \ref{thm:gens}, which establishes that any homogeneous quantization $\H$ of the Lie bialgebra $(\g[u],\delta)$ admits a $\g$-module homomorphism $\J:\g\to \H$ that satisfies the coproduct relation \eqref{ieq:main-D} for $J$, and has image in its degree one component.

\subsection{Definition}\label{ssec:hgs-q-defn}
A \textit{homogeneous quantization} of the Lie bialgebra $(\g[u],\delta)$ is an 
$\mathbb{N}$-graded Hopf algebra $\H=\bigoplus_{n\geq 0} \H_n$ over the graded ring $\C[\hbar]$, where $\deg(\hbar)=1$, 
satisfying the following properties:
\begin{enumerate}[label=($\H$\arabic*), ref=$\H$\arabic*]\setlength{\itemsep}{3pt}
\item\label{N-quant:1} $\H$ is torsion free as a $\C[\hbar]$-module.
\item\label{N-quant:2} $\H / \hbar \H$ is isomorphic to $U(\g[u])$ as a graded Hopf algebra.
\item\label{N-quant:3} The cobracket $\delta$ is related to the coproduct $\Delta$ on $\H$ by the formula
\begin{equation*}
\delta(x)=\frac{\Delta(\widetilde{x})-\Delta^{\mathrm{21}}(\widetilde{x})}{\hbar} \mod \hbar \H\otimes\H
\end{equation*}
for all $x\in \g[u]$, where $\widetilde{x}\in \H$ is any element satisfying $\widetilde{x} \equiv x \mod \hbar$.
\end{enumerate}

\begin{rem}\label{R:Hom-quant} More generally, a homogeneous quantization of an $\N$-graded Lie bialgebra $(\mathfrak{b},\delta_\mathfrak{b})$ is an $\N$-graded Hopf algebra $\H$ over $\C[\hbar]$ satisfying the axioms \eqref{N-quant:1}--\eqref{N-quant:3} with $\g[u]$ replaced by $\mathfrak{b}$; see \cite[\S2.4]{WRQD}. Such a quantization always has the following properties:
\begin{enumerate}\setlength{\itemsep}{3pt}
\item\label{Hom-quant:1} $\H$ is isomorphic to $U(\mathfrak{b})[\hbar]$ as an $\mathbb{N}$-graded $\C[\hbar]$-module; see Corollary 2.6 of \cite{WRQD}. 

\item\label{Hom-quant:2} The $\hbar$-adic completion of $\H$ is a homogeneous quantization of $(\mathfrak{b},\delta_\mathfrak{b})$ over $\C[\![\hbar]\!]$ in the sense first defined in the context of Yangians in \cite{Dr}; see \cite[Def.~2.12]{WRQD}. Moreover, any such quantization over $\C[\![\hbar]\!]$ arises in this way.

\item\label{Hom-quant:3} If $\mathfrak{b}_k$ denotes the $k$-th homogeneous component of $\mathfrak{b}$ and $\pi:\H\twoheadrightarrow U(\mathfrak{b})$ is the quotient map provided by \eqref{N-quant:2}, then there is a unique 
embedding of graded Hopf algebras 
\begin{equation*}
\jmath:U(\mathfrak{b}_0)[\hbar]\hookrightarrow \H
\end{equation*}
for which $\pi\circ \jmath|_{U(\mathfrak{b}_0)}$ is the natural inclusion $U(\mathfrak{b}_0)\hookrightarrow U(\mathfrak{b})$. Indeed, by \eqref{N-quant:2} one has
$
\H_0=(\H / \hbar \H)_0  \cong U(\mathfrak{b})_0 = U(\mathfrak{b}_0)
$
as Hopf algebras over $\C$. This implies that there is a unique homomorphism $\jmath:U(\mathfrak{b}_0)[\hbar]\to \H$ of graded Hopf algebras over $\C[\hbar]$ with $\pi\circ \jmath|_{U(\mathfrak{b}_0)}$ equal to the inclusion of $U(\mathfrak{b}_0)$ into $U(\mathfrak{b})$. Since $\H$ is torsion free, $\jmath$ is necessarily injective. 
\end{enumerate}
\end{rem}

\subsection{Primitive elements}\label{ssec:prim}

Now let $\H$ be a homogeneous quantization of $(\g[u],\delta)$. Then 
by \eqref{Hom-quant:3} of Remark \ref{R:Hom-quant}, there is an embedding of graded Hopf algebras 
\begin{equation*}
U(\g)[\hbar]\hookrightarrow \H
\end{equation*}
which identifies the enveloping algebra $U(\g)$ with the component $\H_0$ of $\H$. Henceforth, we shall use this embedding to view $U(\g)[\hbar]$ as a subalgebra of  $\H$. In particular, $ \g\otimes_\C\C[\hbar]\cong\C[\hbar]\cdot \g\subset U(\g)[\hbar]$ is a subset of $\H$. This fact is used in the following proposition, which computes the Lie algebra of primitive elements in $\H$.
\begin{prop}
\label{pr:prim}
Let $\H$ be a homogeneous quantization of $(\g[u],\delta)$. Then the Lie algebra $\operatorname{Prim}(\H)$ of primitive elements in $\H$ is equal to $\g \otimes_{\C} \C[\hbar]$:
\begin{equation*}
\operatorname{Prim}(\H) := \left\{y \in \H: \Delta(y)=\square(y)\right\}=\g \otimes_{\C} \C[\hbar].
\end{equation*}
\end{prop}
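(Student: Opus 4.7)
The plan is to reduce to the case of a homogeneous element $y \in \H_n$ and proceed by induction on $n$, using the semiclassical limit axiom \eqref{N-quant:3} to extract information from the primitivity of $y$.

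First, the inclusion $\g \otimes_\C \C[\hbar] \subseteq \operatorname{Prim}(\H)$ is immediate: the embedding $U(\g)[\hbar]\hookrightarrow \H$ from \eqref{Hom-quant:3} of Remark \ref{R:Hom-quant} is a map of graded Hopf algebras, and $\g\subset U(\g)$ consists of primitive elements; scaling by powers of $\hbar$ preserves primitivity. Next, since $\Delta$ is homogeneous of degree zero and $\H$ is a direct sum of its graded components, $\operatorname{Prim}(\H)$ is a graded $\C[\hbar]$-submodule, so it suffices to show that any homogeneous primitive $y\in \H_n$ lies in $\hbar^n\g$.

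The base case $n=0$ follows because $\H_0=U(\g)$ as Hopf algebras (by \eqref{N-quant:2} together with \eqref{Hom-quant:3}), and the primitive elements of $U(\g)$ are exactly $\g$ by a standard PBW argument. For the inductive step, suppose $n\geq 1$ and $y\in \H_n$ is primitive. Let $\bar{y}\in U(\g[u])$ denote its image under the quotient $\pi:\H\twoheadrightarrow U(\g[u])$; by \eqref{Hom-quant:1}, $\bar{y}$ lies in the degree-$n$ component $U(\g[u])^n$. The primitivity condition $\Delta(y)=\Delta^{21}(y)$ combined with axiom \eqref{N-quant:3} gives
\begin{equation*}
\delta(\bar{y}) \;=\; \frac{\Delta(y)-\Delta^{21}(y)}{\hbar}\bmod \hbar\,\H\otimes\H \;=\; 0.
\end{equation*}
The key input is then the explicit description of $\delta$ on $\g[u]$: since $\operatorname{Prim}(U(\g[u]))=\g[u]$ (again by the standard PBW fact), $\bar{y}$ itself must be primitive in $U(\g[u])$, so $\bar{y}\in \g[u]\cap U(\g[u])^n=\g u^n$; but the formula for $\delta$ from Section \ref{ssec:current} shows $\delta(xu^n)=\sum_{a+b=n-1}[x\otimes 1,\Omega]u^av^b$, which vanishes (for $n\geq 1$) only when $x=0$, since $\Omega$ is nondegenerate as an element of $\mathrm{Sym}^2(\g)$. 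Therefore $\bar{y}=0$, meaning $y\in \hbar\H$.

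By torsion-freeness \eqref{N-quant:1}, we may write $y=\hbar y'$ with $y'\in \H_{n-1}$ uniquely determined, and the identity $\hbar\Delta(y')=\Delta(y)=\square(y)=\hbar\,\square(y')$ forces $y'$ to be primitive as well. By the inductive hypothesis $y'\in \hbar^{n-1}\g$, hence $y\in \hbar^n\g$, completing the induction. The only nontrivial step is recognizing that semiclassical limits of primitive elements are cocommutative in $(\g[u],\delta)$ and that the cocommutative part of $\g[u]$ is precisely $\g$; this is the main pivot of the argument and is where both axiom \eqref{N-quant:3} and the explicit form of the cobracket are used essentially.
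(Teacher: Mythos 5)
Your proposal is correct and follows essentially the same route as the paper's proof: induct on the degree, reduce modulo $\hbar$, use axiom \eqref{N-quant:3} together with the explicit cobracket to force the image in $U(\g[u])$ to vanish, then divide by $\hbar$ via torsion-freeness. The only cosmetic difference is that you make explicit the reduction to homogeneous elements and the easy inclusion $\g\otimes_\C\C[\hbar]\subseteq\operatorname{Prim}(\H)$, which the paper leaves implicit.
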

\begin{proof} This follows by a modification of the argument used to establish the analogous assertion for a specific filtered quantization of $(\g[u],\delta)$ --- the Yangian of $\g$ defined over $\C$ --- in Lemma B.1 of \cite{GTLW19}.
We will prove by induction on $k$ that
$\ds \operatorname{Prim}(\H) \cap \H_k = \g \otimes_{\C} \C \hbar^k$.
The base case follows since $\H_0=U(\g)$ and $\operatorname{Prim}(U(\g)) = \g$. 
Assuming the assertion for $k$, let 
$y \in \H_{k+1}$ be an arbitrary primitive element. Let $\bar{y}$ be its image in $\H / \hbar \H \cong U(\g[u])$. Thus,
$\bar{y}$ is a primitive element of degree $k+1$ in $U(\g[u])$, hence $\bar{y} = x  u^{k+1}$ for some $x \in \g$. 
Using axiom \eqref{N-quant:3} and that $\Delta(y)=\Delta^{21}(y)$, we obtain
\begin{equation*}
\left. \delta(x u^{k+1})=\frac{\Delta(y)-\Delta^{21}(y)}{\hbar} \right\vert_{\hbar = 0} = 0.
\end{equation*}
By definition of $\delta$ (see Section \ref{ssec:current}), this means $[x\otimes 1,\Omega]=0$, which implies that $x=0$ and hence $\bar{y}=0$.
Thus, $y = \hbar z$ for some $z\in \operatorname{Prim}(\H)\cap \H_k$, and we are done by induction. \qedhere
\end{proof}

\subsection{Generators of a homogeneous quantization}\label{ssec:gens}

Since any homogeneous quantization $\H$ of $(\g[u],\delta)$ contains $U(\g)$ as its degree $0$ component,  each component $\H_k$ of $\H$ becomes a $\g$-module under the adjoint action $x\cdot y=[x,y]$, for all $x\in \g$ and $y\in \H_k$. The following theorem, which provides the main result of Section \ref{sec:hgs-gens}, shows that there is a distinguished $\g$-module homomorphism $\J:\g\to \H_1$ which is compatible with $\Delta$, where $\g$ acts on itself via the adjoint action.  
\begin{thm}\label{thm:gens}
Let $\H$ be a homogeneous quantization of $(\g[u],\delta)$. Then there exists a $\g$-module homomorphism $\J: \g \to \H_1$ satisfying $\J(x)=xu$ modulo $\hbar$ and 
\begin{equation*}
\Delta(\J(x)) = \J(x) \otimes 1 + 1 \otimes \J(x) + \frac{\hbar}{2}[x \otimes 1, \Omega]
\end{equation*}
for each $x\in \g$.
Moreover, if $\J^{'}: \g \to \H_1$ is another such map, then there is $\lambda \in \mathbb{C}$ such that
$\J(x) - \J^{'}(x) = \hbar \lambda x$ for all $x\in \g$.
\end{thm}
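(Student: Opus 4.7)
The plan is to construct $\J$ in two stages: first produce a $\g$-equivariant lift $\wt{J}:\g\to \H_1$ of the map $x\mapsto xu$, and then correct it by $\hbar$ times a coboundary in the coalgebra cobar complex of $U(\g)$ to enforce the desired coproduct formula. The two cohomological inputs are Whitehead's first lemma (for the $\g$-equivariant lift and for making the correction $\g$-equivariant) and Cartier's computation of $H^\bullet$ of the cobar complex of $U(\g)$, which identifies $H^2$ with $\Lambda^2 \g$ via antisymmetrization of primitive parts.

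For the lift, axioms \eqref{N-quant:1}--\eqref{N-quant:2} together with torsion-freeness produce a short exact sequence of $\g$-modules (under the adjoint action) $0\to \hbar U(\g)\to \H_1\to U(\g[u])_1\to 0$. Since $\hbar U(\g)$ is a countable sum of finite-dimensional $\g$-modules via the PBW filtration, Whitehead's lemma gives $H^1(\g,\Hom_\C(\g,\hbar U(\g)))=0$, so any $\C$-linear lift can be adjusted to a $\g$-equivariant one $\wt{J}:\g\to \H_1$ satisfying $\wt{J}(x)\equiv xu \pmod{\hbar}$. Now define $C(x) := \Delta(\wt{J}(x))-\square(\wt{J}(x))-\tfrac{\hbar}{2}[x\otimes 1,\Omega]$. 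Reducing mod $\hbar$ gives $\Delta(xu)-\square(xu)=0$, so $C(x) = \hbar c(x)$ with $c(x)\in U(\g)^{\otimes 2}$. Coassociativity of $\Delta$ applied to $\wt{J}(x)$, together with the $\g$-invariance of $\Omega$ (which makes the contributions of $P(x):=\tfrac{\hbar}{2}[x\otimes 1,\Omega]$ cancel across $(\Delta\otimes\Id)$ and $(\Id\otimes\Delta)$), forces $c$ to be a $2$-cocycle; and axiom \eqref{N-quant:3} forces $C(x)-C^{21}(x)\in \hbar^2(\H\otimes\H)$, so $c(x)-c(x)^{21}$ is $\hbar$-divisible of degree zero in a torsion-free module, hence zero. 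Thus $c$ is symmetric. By Cartier's theorem, a symmetric $2$-cocycle is cohomologically trivial, i.e., $c(x)=\square(\phi(x))-\Delta(\phi(x))$ for some $\phi(x)\in U(\g)$; complete reducibility lets us take $\phi:\g\to U(\g)$ to be $\g$-equivariant, and setting $\J := \wt{J}+\hbar\phi$ yields the sought map.

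For uniqueness, if $\J,\J'$ both satisfy the conclusion, then $\Delta(\J(x)-\J'(x))=\square(\J(x)-\J'(x))$ since the Casimir corrections cancel, so the difference is primitive; it also lies in $\hbar\H\cap \H_1 = \hbar U(\g)$, and thus in $\hbar\g$ by Proposition \ref{pr:prim}. The resulting $\g$-module map $\g\to \g$ must be scalar by Schur's lemma and the simplicity of $\g$.

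The main obstacle is the middle step: carefully combining coassociativity of $\Delta$ on $\wt{J}(x)$ with axiom \eqref{N-quant:3} to simultaneously establish both the cocycle condition and the symmetry of $c$, where $\g$-invariance of $\Omega$ must be exploited to cancel all of the $P(x)$ contributions. Applying Cartier's theorem to extract $\phi$ and maintaining $\g$-equivariance via Whitehead throughout are the remaining technical points.
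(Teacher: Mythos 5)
Your proposal is correct and follows essentially the same route as the paper: the same two cohomological inputs (Whitehead's first lemma to fix equivariance, Cartier's vanishing of $H^2$ on the symmetric part of the cobar complex to fix the coproduct), and the same uniqueness argument via primitivity, Proposition \ref{pr:prim}, and Schur's lemma. The only difference is organizational — you make the lift $\g$-equivariant first and then correct the coproduct defect, whereas the paper records both defects $\gamma$ and $\eta$ in a double complex and performs the identical two corrections, in the same order, in Section \ref{ssec:pf-claim}.
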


\begin{proof}
Let us first prove the uniqueness assertion. Let $\J$ and $\J^{'}$ be two maps satisfying the conditions of the theorem. Since they both have image in $\H_1$ and are equal modulo $\hbar$, we have
$\J(x) - \J^{'}(x) \in \hbar U(\g)$ for all $x\in \g$. As $\H$ is torsion free, it follows that there is a unique linear map $B:\g\to U(\g)$ satisfying
\begin{equation*}
\J(x)- \J^{'}(x)=\hbar B(x) \quad \forall\; x\in \g. 
\end{equation*}
Since each difference $\J(x) - \J^{'}(x)$ is a primitive element, the image of $B$ is contained in the Lie algebra $\g$ of primitive elements in $U(\g)$. Moreover, as $\J$ and $\J^{'}$ are both $\g$-module homomorphisms, we must have $B\in \End_\g(\g)$. Thus, by Schur's lemma, there is $\lambda\in \C$ such that $B=\lambda \Id_\g$, and hence 	
$\J(x) - \J^{'}(x) = \hbar \lambda x$ for each $x\in\g$.

To prove the existence of $\J$, we begin by choosing an arbitrary linear map 
\begin{equation*}
f:\g\to \H_1
\end{equation*}
whose composite with the quotient map $\H_1\to \H_1 / \hbar \H_0 \cong U(\g[u])_1$ equals the linear map $G:\g\to \g u\subset U(\g[u])_1$, given by $G(x)=xu$ for each $x\in \g$. That is, $f$ satisfies $f(x)= xu \mod \hbar \H_0$ for each $x\in \g$. Now define
\begin{equation*}
\gamma: \g \times \g \to U(\g) \aand \eta: \g \to U(\g) \otimes U(\g)
\end{equation*}
by the following equations:
\begin{align}
        f([x,y]) &= [x,f(y)] + \hbar \gamma(x,y) \label{eq:gamma} \\
        \Delta(f(x)) &= f(x) \otimes 1 + 1 \otimes f(x) + \frac{\hbar}{2}[x \otimes 1, \Omega] + \hbar\eta(x) \label{eq:eta}
\end{align}

\noindent {\bf Claim.} There exists a linear map $\varphi: \g \to U(\g)$ satisfying
\begin{equation*}
\gamma(x,y) = [x,\varphi(y)] - \varphi([x,y])  \aand  \eta(x) = \square(\varphi(x)) - \Delta(\varphi(x))
\end{equation*}
for all $x,y\in \g$. 

Assuming such a $\varphi$ exists, it is easy to see that $\J(x) := f(x) + \hbar \varphi(x)$ 
satisfies the properties listed in the theorem. Indeed, for each $x,y\in \g$, we have 
\begin{equation*}
    \begin{aligned}
        \J([x,y]) &= f([x,y]) + \hbar \varphi([x,y])\\
        & = [x,f(y)] + \hbar \gamma(x,y) + \hbar([x,\varphi(y)] - \gamma(x,y)) \\ 
        &=
 [x,f(y) + \hbar\varphi(y)] \\
 &= [x,\J(y)],
 \end{aligned}
 \end{equation*}
 and hence $\J\in \Hom_\g(\g,\H_1)$. Similarly, for each $x\in \g$, one has
 \begin{equation*}
 \begin{aligned}
        \Delta(\J(x)) 
        &= \Delta(f(x) + \hbar \varphi(x))\\
        & = \square(f(x)) + \frac{\hbar}{2}[x \otimes 1, \Omega] + \hbar \eta(x) + \hbar(\square(\varphi(x)) - \eta(x)) \\
        & = \square(\J(x)) + \frac{\hbar}{2}[x \otimes 1, \Omega].
    \end{aligned}
\end{equation*}

The existence of $\varphi$ satisfying the conditions of the claim follows from a cohomological argument, which we carry out in the remainder of this section. 
In Section \ref{ssec:gamma-eta}, we use the axioms of a bialgebra to obtain equations satisfied by $\gamma$ and $\eta$.
We view these equations as defining a cocycle in a double cochain complex, introduced in
Section \ref{ssec:double-complex}.
In Sections \ref{ssec:coalg} and \ref{ssec:CE}, 
we relate our double complex to the well-known coalgebra and Chevalley--Eilenberg complexes. We review the
standard results (Cartier's theorem and Whitehead's lemma) about the cohomology of these complexes and, in
Section \ref{ssec:pf-claim}, we apply them to prove the claim. \qedhere
\end{proof}
\begin{rem}\label{rem:Hopf}
Let $\J$ be as in the statement of Theorem \ref{thm:gens}, and let $x\in \g$. Then the values of the counit $\varepsilon$ and antipode $S$ of  $\H$  on $\J(x)$ are given by
\begin{equation*}
\varepsilon(\J(x))=0 \quad \text{ and }\quad S(\J(x))=-\J(x)+\frac{\hbar}{4}c_\g x,
\end{equation*}
where $c_\g$ is the eigenvalue of the quadratic Casimir element $C\in U(\g)$ on the adjoint representation of $\g$. Indeed, since $\varepsilon$ is an algebra homomorphism and $\J$ is a $\g$-module homomorphism, we have
\begin{equation*}
\varepsilon(\J([x,y]))=\varepsilon([x,\J(y)])=[\varepsilon(x),\varepsilon(\J(y))]=0 \quad \forall \; x,y\in \g.
\end{equation*}
Since $\g$ is a perfect Lie algebra, it follows that $\varepsilon(\J(x))=0$ for all $x\in \g$. Similarly, as $S$ is the convolution inverse of the identity map $\Id_\H$, we have 
\begin{equation*}
0=\varepsilon(\J(x))=(m \circ (S\otimes \Id_\H) \circ \Delta)(\J(x))=S(\J(x))+\J(x)+\frac{\hbar}{2} m([\Omega, x\otimes 1]),
\end{equation*}
where $m$ is the product on $\H$. The claimed formula for $S(\J(x))$ then follows from the observation that $m([\Omega, x\otimes 1])=-\frac{1}{2}\rho_{\mathrm{ad}}(C)(x)$, where $\rho_{\mathrm{ad}}:U(\g)\to \End(\g)$ is the action homomorphism for the adjoint of $\g$ on itself.
\end{rem}

\subsection{Properties of $\gamma$ and $\eta$}\label{ssec:gamma-eta}

In what follows, $\gamma:\g\times \g \to U(\g)$ and $\eta:\g\to U(\g)\otimes U(\g)$ are the functions defined by the equations \eqref{eq:gamma} and \eqref{eq:eta}, respectively.

\begin{prop}\label{pr:gamma-eta}
The maps $\gamma$ and $\eta$ satisfy the equations
\begin{align}
\gamma([x,y],z) - \gamma(x,[y,z]) + \gamma(y,[x,z]) - [x,\gamma(y,z)] + [y,\gamma(x,z)] &=0 \label{eq:gg} \\
1\otimes \eta(x) - (\Delta\otimes\Id) (\eta(x)) + (\Id\otimes\Delta) (\eta(x)) - \eta(x)\otimes 1 &= 0 \label{eq:ee} \\
\eta([x,y]) - [\square(x),\eta(y)] + \square(\gamma(x,y)) - \Delta(\gamma(x,y)) &=0 \label{eq:eg}
\end{align}
for each $x,y,z\in \g$. Moreover, $\eta$ is symmetric:  $\eta(x) = \eta^{21}(x)$ for all $x\in \g$. 
\end{prop}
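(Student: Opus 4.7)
The plan is to derive each of the four identities by applying a structural axiom of the Hopf algebra $\H$---respectively the Jacobi identity, coassociativity, multiplicativity of $\Delta$, and axiom \eqref{N-quant:3}---to $f(x)$ or $f([x,y])$, expanding both sides via \eqref{eq:gamma} and \eqref{eq:eta}, and invoking the torsion-freeness of $\H$ to cancel a common factor of $\hbar$. The recurring inputs from Section \ref{sec:defns} are the $\g$-invariance $[\square(x),\Omega]=0$ and the symmetry $\Omega^{21}=\Omega$ of the Casimir tensor.

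For \eqref{eq:gg}, I would use the Jacobi identity in $\g$ to write $[[x,y],z] = [x,[y,z]] - [y,[x,z]]$, expand $f([[x,y],z])$ in two different ways using \eqref{eq:gamma}, and then apply the Jacobi identity in $\H$ to rewrite $[[x,y],f(z)]$; all $\hbar^0$ terms cancel, leaving $\hbar$ times the desired identity. For \eqref{eq:ee}, I would apply coassociativity $(\Delta\otimes\Id)\circ\Delta = (\Id\otimes\Delta)\circ\Delta$ to $f(x)$ and expand using \eqref{eq:eta}. Since $(\Delta\otimes\Id)(\Omega)=\Omega_{13}+\Omega_{23}$ and $(\Id\otimes\Delta)(\Omega)=\Omega_{12}+\Omega_{13}$, the Casimir contributions of order $\hbar/2$ produce the same symmetric expression on both sides; the remaining $\hbar\eta(x)$-terms then give \eqref{eq:ee} after dividing by $\hbar$.

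For \eqref{eq:eg}, I would apply $\Delta$ to \eqref{eq:gamma} and expand both sides. The key simplification is $[\square(x),[y\otimes 1,\Omega]] = [[x,y]\otimes 1,\Omega]$, which follows from Jacobi in $\H\otimes \H$ combined with $[\square(x),\Omega]=0$; matching the $\hbar$-linear terms then yields \eqref{eq:eg}. For the symmetry of $\eta$, I would apply axiom \eqref{N-quant:3} to $xu\in\g[u]$ with the lift $\widetilde{xu}=f(x)$: a direct computation using $\Omega^{21}=\Omega$ together with $[\square(x),\Omega]=0$ gives $\hbar^{-1}(\Delta(f(x))-\Delta^{21}(f(x))) = [x\otimes 1,\Omega] + \eta(x)-\eta^{21}(x)$, and comparison with $\delta(xu)=[x\otimes 1,\Omega]$ forces $\eta(x)-\eta^{21}(x) \in \hbar(\H\otimes\H)$. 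Since the natural map $U(\g)\otimes U(\g) \to (\H\otimes_{\C[\hbar]}\H)/\hbar \cong U(\g[u])\otimes U(\g[u])$ is injective (by \eqref{Hom-quant:1}), this difference vanishes in $U(\g)\otimes U(\g)$.

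I do not anticipate any genuine obstacle: each identity reduces to a routine expansion. The only delicate point is bookkeeping---cleanly isolating the $\hbar^1$-coefficient in \eqref{eq:ee} and \eqref{eq:eg} by verifying that the $\hbar/2$-Casimir terms truly cancel on both sides---before torsion-freeness of $\H$ can be applied to conclude.
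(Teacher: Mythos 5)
Your proposal is correct and follows essentially the same route as the paper: the Jacobi identity in $\H$ for \eqref{eq:gg}, coassociativity for \eqref{eq:ee}, multiplicativity of $\Delta$ (with the simplification $[\square(x),[y\otimes 1,\Omega]]=[[x,y]\otimes 1,\Omega]$) for \eqref{eq:eg}, and axiom \eqref{N-quant:3} applied to the lift $f(x)$ of $xu$ for the symmetry of $\eta$. Your final step for the symmetry claim is in fact slightly more careful than the paper's, which states the congruence from \eqref{N-quant:3} as an equality; your observation that $\eta(x)-\eta^{21}(x)\in U(\g)^{\otimes 2}$ cannot lie in $\hbar(\H\otimes\H)$ is exactly what closes that gap.
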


\begin{proof}
The equations \eqref{eq:gg}--\eqref{eq:eg} are consequences of the Jacobi identity, the coassociativity of $\Delta$, and the fact that $\Delta$
is an algebra homomorphism,  respectively. We begin by proving \eqref{eq:gg}. 
Let $x,y,z \in \g$ and consider the following relation in $\H$:
\begin{equation*}
[[x,y],f(z)] = [x,[y,f(z)]] - [y,[x,f(z)]]
\end{equation*}
By definition of $\gamma$, the three terms of this equation can be rewritten as follows:
\begin{align*}
[[x,y],f(z)] &=  f([[x,y],z]) - \hbar \gamma([x,y],z)\\
[x,[y,f(z)]] &=  f([x,[y,z]]) - \hbar \gamma(x,[y,z]) - \hbar [x,\gamma(y,z)]\\
[y,[x,f(z)]] &=  f([y,[x,z]]) - \hbar \gamma(y,[x,z]) - \hbar [y,\gamma(x,z)]
\end{align*}
Substituting these expressions back into the original identity and applying the linearity of $f$ and the Jacobi identity for $\g$ yields the relation \eqref{eq:gg}.

Next we prove \eqref{eq:ee}. Since $\Delta$ is coassociative, we have
\begin{equation*}
(\Delta \otimes \Id)  \Delta(f(x)) = (\Id \otimes \Delta)  \Delta(f(x))
\end{equation*}
for each $x\in \g$. 
From the definition \eqref{eq:eta} of $\eta$, the left-hand side of this equation expands as
\begin{equation*}
\begin{aligned}
(\Delta \otimes \Id)  \Delta(f(x)) ={} & \Delta(f(x)) \otimes 1 + 1\otimes 1 \otimes f(x) 
+ \hbar (\Delta \otimes \Id) \eta(x)  \\
& + \frac{\hbar}{2}[x \otimes 1 \otimes 1 + 1\otimes x \otimes 1, \Omega_{13} + \Omega_{23}] \\
 ={} & \square_3(f(x)) + \frac{\hbar}{2} [x \otimes 1 \otimes 1, \Omega_{12}+\Omega_{13}] + \hbar \, \eta(x) \otimes 1 \\
& + \frac{\hbar}{2}[1 \otimes x \otimes 1, \Omega_{23}] + \hbar (\Delta \otimes \Id) \eta(x) 
\end{aligned}
\end{equation*}
where $\square_3(a) = a \otimes 1 \otimes 1 + 1 \otimes a \otimes 1 + 1 \otimes 1 \otimes a$.
In a similar fashion, we have
\begin{equation*}
\begin{aligned}
(\Id\otimes \Delta) \Delta(f(x))
={}&\square_3(f(x)) + \frac{\hbar}{2} [1 \otimes x \otimes 1, \Omega_{23}] + \hbar \, 1 \otimes \eta(x) \\
&+ \frac{\hbar}{2}[x \otimes 1 \otimes 1, \Omega_{12}+\Omega_{13}] + \hbar (\Id \otimes \Delta) \eta(x).    
\end{aligned}
\end{equation*}
Equating both of these expressions and simplifying outputs the identity \eqref{eq:ee}.

Let us now turn to \eqref{eq:eg}. As $\Delta$ is an algebra homomorphism, we have
\begin{equation*}
\Delta([x,f(y)]) = [\Delta(x),\Delta(f(y))]
\end{equation*}
for every $x,y\in \g$. Expanding both sides using \eqref{eq:gamma} and \eqref{eq:eta} gives
\begin{align*}
\BB{\Delta(x),\Delta(f(y))} &= \BB{\square(x), \, \square(f(y)) + \frac{\hbar}{2}[y \otimes 1, \Omega] + \hbar \eta(y)}\\
&= \square (f([x, y])) - \hbar \, \square(\gamma(x,y)) + \frac{\hbar}{2}[[x,y]\otimes 1, \Omega] + \hbar [\square(x), \eta(y)], \\
\Delta([x,f(y)])  &= \Delta (f([x,y]) - \hbar \gamma(x,y)) \\
&= \square (f([x, y])) + \frac{\hbar}{2}[[x,y]\otimes 1, \Omega] + \hbar \, \eta([x,y]) - \hbar \Delta(\gamma(x,y)).
\end{align*}
The equation \eqref{eq:eg} now follows by reinserting these expressions into the identity $\Delta([x,f(y)]) = [\Delta(x),\Delta(f(y))]$ and simplifying.

To complete the proof of the proposition, it remains to establish that $\eta(x)=\eta^{21}(x)$ for each $x\in \g$.  To this end, note that since  $f:\g\to \H_1$ satisfies $f(x)=G(x)\mod \hbar \H_1$, the axiom \eqref{N-quant:3} for a homogeneous quantization of $(\g[u],\delta)$ yields
\begin{equation*}
\Delta(f(x)) - \Delta^{21}(f(x)) = \hbar \delta(G(x))=\hbar[x\otimes 1,\Omega]. 
\end{equation*}
On the other hand, the defining equation \eqref{eq:eta} for $\eta$ gives
\begin{equation*}
    \begin{aligned}
        \Delta(f(x)) - \Delta^{21}(f(x)) &= \frac{\hbar}{2}([x \otimes 1, \Omega]-[1 \otimes x, \Omega]) + \hbar (\eta(x) - \eta^{21}(x)) \\
        &= \hbar \delta(G(x)) + \hbar (\eta(x) - \eta^{21}(x))
    \end{aligned}
\end{equation*}
and hence we must indeed have $\eta(x) = \eta^{21}(x)$. \qedhere
\end{proof}

\subsection{The double complex}\label{ssec:double-complex}
We now define a bicomplex $\ds \left\{\K^{m,n}\right\}_{m\geq 0, n\geq 1}$ equipped with a horizontal
differential $\partial_H : \K^{m,n}\to \K^{m+1,n}$ and a vertical differential $\partial_V : \K^{m,n}\to
\K^{m,n+1}$ as follows. For each $m\geq 0$ and $n\geq 1$, set
\[
\K^{m,n} := \Hom(\wedge^m(\g) \otimes \g_{\ad}, U(\g)^{\otimes n}),
\]
where $\g_{\ad}$ denotes the adjoint representation of $\g$.
For $\omega\in\K^{m,n}$, the horizontal differential $\partial_H(\omega)$ is given by the formula
\begin{equation}\label{eq:del-CE}
\begin{aligned}
\partial_H(\omega)(x_1,&\ldots,x_{m+1};v) \\
={} & \sum_{1\leq i<j\leq m+1} (-1)^{i+j}
\omega([x_i,x_j],\ldots,\hat{x_i},\ldots,\hat{x_j},\ldots x_{m+1} ; v) \\
&+ \sum_{i=1}^{m+1} (-1)^{i-1}\Big(
x_i\cdot \omega(x_1,\ldots,\hat{x_i},\ldots x_{m+1}; v)\\[-12pt]
& \hspace{13em}
 - 
\omega(x_1,\ldots,\hat{x_i},\ldots x_{m+1}; x_i\cdot v)
\Big)
\end{aligned}
\end{equation}
for all $x_1,\ldots,x_{m+1}\in \g$ and $v\in \g_{\ad}$, where we have used the notation $\hat{y}$ to indicate
that the variable $y$ is omitted from the argument. By a little abuse of notation, $x\cdot m$ means
the action of $x\in \g$ on an element $m$ of a $\g$--module ($\g_{\ad}$ or $U(\g)^{\otimes n}$).

The vertical differential  is defined by $\partial_V(\omega)(\ul{x};v) = \delta_n(\omega(\ul{x};v))$,
where $\delta_n : U(\g)^{\otimes n} \to U(\g)^{\otimes n+1}$ is given as follows:
\begin{equation}\label{eq:del-C}
\begin{aligned}
\delta_n(y) &= 1\otimes y + \sum_{i=1}^n (-1)^i \lp \Id^{\otimes i-1}\otimes\Delta\otimes\Id^{n-i}\rp (y)  + (-1)^{n+1} y\otimes 1
\end{aligned}
\end{equation}
It is a routine exercise to verify that $\partial_H$ and $\partial_V$ are commuting differentials,
which we leave to the reader.

\noindent\textbf{Remarks.} Let us gather a few important preliminary observations about the above  bicomplex:
\begin{enumerate}\setlength{\itemsep}{3pt}
\item If we keep $m$ fixed, the resulting cochain complex $(\K^{m,\bullet},\partial_V)$ is nothing
but $\Hom(\wedge^m(\g)\otimes\g_{\ad},-)$ applied to the coalgebra complex $(T^{\bullet}(U(\g)),\delta_\bullet)$ --- also called the \textit{cobar complex} ---
whose cohomology was computed by Cartier in \cite{Car57}. We review Cartier's theorem below
in Section \ref{ssec:coalg}.

\item\label{R-double-comlex:2} Keeping $n$ fixed, the cochain complex $(\K^{\bullet,n},\partial_H)$ is isomorphic, via
Hom-tensor adjointness, to the Chevalley--Eilenberg complex valued in the $\g$-module
$\g_{\ad}^*\otimes U(\g)^{\otimes n}$. The computation of its first and second cohomology groups, called
Whitehead's lemma, is recalled in Section \ref{ssec:CE}.

\item\label{R-double-comlex:3} The equations for $\gamma\in\K^{1,1}$ and $\eta\in\K^{0,2}$ obtained in Proposition \ref{pr:gamma-eta}
can be expressed equivalently as 
\begin{equation*}
\partial_H(\gamma) = 0,\qquad \partial_V(\eta)=0, \qquad \partial_V(\gamma) = \partial_H(\eta),
\end{equation*}
and $\eta = \eta^{21}$. 
Moreover, the claim from the proof of Theorem \ref{thm:gens} which we aim to prove (see below \eqref{eq:eta}) is equivalent to the assertion that there is $\varphi\in\Hom(\g_{\ad},U(\g)) = \K^{0,1}$ satisfying $\partial_H(\varphi) = \gamma$ and $\partial_V(\varphi)=\eta$; see Figure \ref{fig:cartoon} below.
\begin{figure}[h]
\[
\xy
(0,0)*{\K^{0,1}}="A"; (-5,-3)*{\text{\rotatebox[origin=c]{45}{$\in$}}}; (-7,-5)*{\varphi};
(20,0)*{\K^{1,1}}="B"; (20,-3)*{\text{\rotatebox[origin=c]{90}{$\in$}}}; (20,-6)*{\gamma};
(0,20)*{\K^{0,2}}="C"; (-7,19)*{\eta\in};
(20,20)*{\K^{1,2}}="D";
(40,0)*{\K^{2,1}}="E";
(0,40)*{\K^{0,3}}="F";
{\ar_{\partial_H} "A"; "B"};
{\ar_{\partial_H} "B"; "E"};
{\ar_{\partial_V} "A"; "C"};
{\ar_{\partial_V} "C"; "F"};
{\ar_{\partial_V} "B"; "D"};
{\ar_{\partial_H} "C"; "D"};
\endxy
\]
\caption{Bicomplex $\K^{\bullet,\bullet}$}\label{fig:cartoon}
\end{figure}

\item One can generalize this bicomplex $\K^{\bullet,\bullet}$ by replacing $\g_{\ad}$
by an arbitrary $\g$-module $V$, and $U(\g)$ by $\mathrm{Sym}(W)$ where $W$ is another
$\g$-module. Recall that $U(\g)$ and $\mathrm{Sym}(\g)$ are isomorphic as coalgebras
with $\g$-action.
\end{enumerate}

\subsection{Cartier's theorem}\label{ssec:coalg}
Let $(C,\Delta,\varepsilon)$ be a cocommutative coalgebra over an arbitrary field $k$.
The coalgebra complex associated to $C$ is $(T^{\bullet}(C),\delta_\bullet)$, where $T^n(C)=C^{\otimes n}$
 and $\delta_n : T^n(C) \to T^{n+1}(C)$ is given by \eqref{eq:del-C} above. Define
an involution $\sigma \in \Aut(T^n(C))$ by the formula
\[
\sigma(x_1\otimes\cdots\otimes x_n) = (-1)^{\frac{n(n+1)}{2}} x_n\otimes\cdots\otimes x_1.
\]
Then, $\sigma$ commutes with $\delta_\bullet$ and the complex decomposes
into $\pm 1$ eigenspaces $T^\bullet(C) = T^\bullet_+(C) \oplus T^\bullet_-(C)$, where
\begin{equation*}
 T^n_\pm(C)=\{y\in T^n(C): \sigma(y)=\pm y\}.
\end{equation*}
The following theorem is a corollary of a result established by Cartier in \cite{Car57} which identifies the cohomology of the complex $(T^{\bullet}(\mathrm{Sym}(V)),\delta_\bullet)$ with the exterior algebra $\Lambda(V)$. This theorem may also be found in \cite[Prop.~3.11]{Dr-quasi} and \cite[Thm.~XVIII.7.1]{KasBook95}, where two alternative proofs of Cartier's result are given.
\begin{thm}\label{thm:cartier}
Let $C = (\mathrm{Sym}(V), \Delta, \epsilon)$, where $V$ is a finite dimensional vector space over $k$, and $\Delta$ and $\epsilon$ are the algebra homomorphisms determined by
\begin{equation*}
\Delta(v) = \square(v) \aand \epsilon(v) = 0
\end{equation*}
for every $v\in V$. Then $H^{2n}(T^\bullet_-(C),\delta_\bullet)=0$ for each $n\geq 0$. 
\end{thm}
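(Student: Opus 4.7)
The plan is to deduce the theorem from Cartier's full computation $H^n(T^\bullet(C),\delta_\bullet) \cong \Lambda^n(V)$ by identifying the induced action of the involution $\sigma$ on cohomology. Once I show that this induced action is multiplication by $(-1)^n$ on $H^n$, the conclusion follows at once: on $H^{2n}$ the operator $\sigma$ acts as the identity, whence its $(-1)$-eigenspace $H^{2n}(T^\bullet_-(C),\delta_\bullet)$ must vanish.

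First, I would recall the explicit quasi-isomorphism underlying Cartier's theorem, namely the antisymmetrization map
$$\alpha_n : \Lambda^n(V) \to T^n(C),\qquad v_1\wedge\cdots\wedge v_n \mapsto \sum_{\pi\in S_n} \mathrm{sgn}(\pi)\, v_{\pi(1)}\otimes\cdots\otimes v_{\pi(n)},$$
where each $v_i$ is viewed as a primitive element of $C=\mathrm{Sym}(V)$. That $\alpha_\bullet$ is a cochain map from $(\Lambda^\bullet(V),0)$ to $(T^\bullet(C),\delta_\bullet)$ is a short calculation that uses only primitivity of the $v_i$, and the resulting map on cohomology being an isomorphism in each degree is precisely the content of \cite[Prop.~3.11]{Dr-quasi} (see also \cite[Thm.~XVIII.7.1]{KasBook95}). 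I would simply cite this rather than reproduce the proof.

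Next, I would compute the action of $\sigma$ on $H^\bullet(T^\bullet(C),\delta_\bullet)$. Since $\sigma$ commutes with $\delta_\bullet$, it descends to an involution on cohomology, and it suffices to evaluate it on representatives of the form $\alpha_n(v_1\wedge\cdots\wedge v_n)$. A brief re-indexing of the defining sum --- substituting $\pi$ by its composition with the reversal permutation $i\mapsto n+1-i$, whose sign is $(-1)^{n(n-1)/2}$ --- combined with the prefactor $(-1)^{n(n+1)/2}$ in the definition of $\sigma$, gives
$$\sigma\bigl(\alpha_n(v_1\wedge\cdots\wedge v_n)\bigr) = (-1)^{n(n+1)/2 + n(n-1)/2}\,\alpha_n(v_1\wedge\cdots\wedge v_n) = (-1)^{n}\,\alpha_n(v_1\wedge\cdots\wedge v_n).$$
Specializing to even degrees $n = 2m$ then yields the theorem.

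Because the whole argument reduces to sign tracking once Cartier's quasi-isomorphism is cited, there is no substantive obstacle. The only point requiring care is the correct computation of the sign of the reversal permutation, which is routine; beyond that, the proof is essentially a one-line verification on the cohomology level.
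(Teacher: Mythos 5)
Your argument is correct and follows the same route the paper intends: the paper simply presents this statement as a corollary of Cartier's theorem (citing \cite{Car57}, \cite[Prop.~3.11]{Dr-quasi}, and \cite[Thm.~XVIII.7.1]{KasBook95}) without spelling out the deduction, and your proof supplies exactly that deduction --- the cohomology is represented by antisymmetrized tensors of primitives, on which $\sigma$ acts by $(-1)^{n(n+1)/2}\cdot(-1)^{n(n-1)/2}=(-1)^{n}$, so the $(-1)$-eigenspace subcomplex has vanishing cohomology in even degrees. The sign bookkeeping is right, and the identification $H^{n}(T^\bullet_-)$ with the $(-1)$-eigenspace of $\sigma$ on $H^{n}(T^\bullet)$ is immediate from the direct-sum decomposition of the complex.
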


Below, we state the specific corollary of this theorem that is relevant to us. By applying equation~(6.2) from~\cite[XVIII.6.2]{KasBook95} and using the fact that $\delta_n$ is a $\g$-intertwiner, we deduce that
$\Hom(\g_{\ad}, T^\bullet_-(U(\g)))$
decomposes as a direct sum of the following subcomplexes (with differentials omitted for brevity):
\[
\Hom_{\g}(\g_{\ad}, T^\bullet_-(U(\g))) \quad \text{and} \quad \g  \Hom(\g_{\ad}, T^\bullet_-(U(\g))).
\]
Since $\Hom(\g_{\ad}, -)$ is an exact functor, it follows that $\Hom_{\g}(\g_{\ad}, -)$ is exact as well. The corollary then becomes an immediate
consequence of the $n=1$ instance of Cartier's theorem.
\begin{cor}\label{cor:vertical}
$H^2(\Hom_{\g}(\g_{\ad},T^\bullet_-(U(\g)),\partial_V) = 0$.
\end{cor}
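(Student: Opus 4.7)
The plan is to deduce the corollary from Cartier's theorem in three stages: transfer the vanishing result from $\mathrm{Sym}(\g)$ to $U(\g)$, apply the exact functor $\Hom(\g_{\ad},-)$, and then isolate the $\g$-invariant part of the resulting complex using complete reducibility.

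For the first stage, I would invoke the symmetrization map, which provides a $\g$-equivariant coalgebra isomorphism $\mathrm{Sym}(\g)\cong U(\g)$ (this is exactly the observation recorded at the end of Section \ref{ssec:double-complex}). Applying the $n=1$ case of Theorem \ref{thm:cartier} with $V=\g$ gives $H^2(T^\bullet_-(\mathrm{Sym}(\g)),\delta_\bullet)=0$, which transfers immediately to the vanishing $H^2(T^\bullet_-(U(\g)),\delta_\bullet)=0$. Since $\Hom(\g_{\ad},-)$ is exact on vector spaces, the cohomology commutes with this functor and we obtain $H^2(\Hom(\g_{\ad},T^\bullet_-(U(\g))),\partial_V)=0$.

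The last stage is the crucial one. I would equip $\Hom(\g_{\ad},M)$ with its standard $\g$-module structure $(x\cdot\varphi)(v)=x\cdot\varphi(v)-\varphi(x\cdot v)$ so that $\Hom_{\g}(\g_{\ad},M)$ is precisely the invariant subspace. Every $U(\g)^{\otimes n}$ is a (possibly infinite) direct sum of finite-dimensional $\g$-modules, hence so is $\Hom(\g_{\ad},U(\g)^{\otimes n})$; Weyl's complete reducibility then produces the canonical decomposition
\begin{equation*}
\Hom(\g_{\ad},T^\bullet_-(U(\g)))=\Hom_\g(\g_{\ad},T^\bullet_-(U(\g)))\oplus\big(\g\cdot\Hom(\g_{\ad},T^\bullet_-(U(\g)))\big),
\end{equation*}
where $\g\cdot N$ denotes the sum of the non-trivial isotypic components of $N$. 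Because $\delta_n$ is a $\g$-intertwiner (coproducts on $U(\g)$ being $\g$-equivariant), so is $\partial_V$, and the decomposition above therefore refines to one of subcomplexes. The vanishing of $H^2$ on the full complex established in the previous stage then forces $H^2$ to vanish on each summand, yielding the claimed identity.

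The one point requiring genuine verification is the $\g$-module splitting $N=N^\g\oplus\g\cdot N$ for the infinite-dimensional $\g$-module $N=\Hom(\g_{\ad},U(\g)^{\otimes n})$; this is the main technical obstacle but follows cleanly once one writes $U(\g)^{\otimes n}$ as a sum of its finite-dimensional $\g$-submodules and invokes Weyl's theorem on each summand, using that any non-trivial simple $\g$-module $V$ satisfies $\g\cdot V=V$. All the remaining steps are formal consequences of exactness and equivariance.
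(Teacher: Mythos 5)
Your proof is correct and follows essentially the same route as the paper: transfer Cartier's theorem to $U(\g)$ via the $\g$-equivariant coalgebra isomorphism with $\mathrm{Sym}(\g)$, apply the exact functor $\Hom(\g_{\ad},-)$, and split off the invariant part as a direct summand of subcomplexes using the $\g$-equivariance of $\delta_n$. The only cosmetic difference is that you justify the splitting $N=N^\g\oplus\g\cdot N$ directly from Weyl's complete reducibility, whereas the paper cites equation (6.2) of \cite[XVIII.6.2]{KasBook95} for the same decomposition.
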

%
%

\subsection{Chevalley--Eilenberg complex}\label{ssec:CE}

Let $M$ be an arbitrary $\g$-module. Recall that the Chevalley--Eilenberg complex  associated to the pair $(\g,M)$ is defined as follows. For each $n>0$, we let $\CC^n(\g,M) = \Hom(\wedge^n \g, M)$ denote the space of all 
alternating $n$-linear maps from $\g$ to $M$, and we set $\CC^0(\g,M) := M$.  The differential 
$d_{CE} : \CC^n(\g,M) \to \CC^{n+1}(\g,M)$ is given by the same formula as \eqref{eq:del-CE}, except now
the second term has only a $\g$-action on $M$:
\begin{equation*}
    \begin{aligned}
        d_{CE}(f)(x_1,\ldots,x_{n+1}) ={}& 
\sum_{1 \le i<j \le n+1} (-1)^{i+j} \omega([x_i,x_j],\ldots,\hat{x_i},\ldots,\hat{x_j},\ldots,x_{n+1}) \\
&+ \sum_{i=1}^{n+1}(-1)^{i-1}x_i \cdot \omega(x_1,\ldots,\hat{x_i},\ldots,x_{n+1})\\
    \end{aligned}
\end{equation*}
This data defines a cochain complex with $n$-th cohomology group denoted $H^n(\g, M)$. 
The following result, known as Whitehead's lemma, asserts that these groups are trivial when $M$ is finite-dimensional and $n=1$ or $2$. We refer the reader to 
\cite[\S 18.3]{KasBook95} for a proof.

\begin{thm}
If $M$ is finite--dimensional, 
then $H^1(\g, M) = H^2(\g, M) = 0$.
\end{thm}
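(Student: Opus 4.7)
The plan is to prove Whitehead's lemma via the classical strategy that combines Weyl's complete reducibility with a Casimir-operator vanishing argument, supplemented by a direct treatment of the trivial coefficient module.

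First, since $\g$ is semisimple, Weyl's theorem on complete reducibility decomposes $M$ as a finite direct sum of irreducibles $M = \bigoplus_\alpha M_\alpha$. Because $\CC^n(\g,-) = \Hom(\wedge^n \g,-)$ commutes with finite direct sums, cohomology splits accordingly: $H^n(\g, M) = \bigoplus_\alpha H^n(\g, M_\alpha)$. It therefore suffices to prove vanishing of $H^1$ and $H^2$ when the coefficient module is an irreducible $\g$-module $M_\alpha$.

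Second, for a nontrivial irreducible $M_\alpha$, I would use the quadratic Casimir $C$ of $(\cdot,\cdot)$, viewed as a central element of $U(\g)$. By Schur's lemma, $C$ acts on $M_\alpha$ by a scalar $c_\alpha$, and $c_\alpha \neq 0$ precisely because the highest weight $\lambda$ of $M_\alpha$ is nonzero (one has the standard formula $c_\alpha = (\lambda, \lambda + 2\rho)$). Since $C$ is central, its coefficient action on $\CC^\bullet(\g, M_\alpha)$ commutes with $d_{CE}$ and therefore induces multiplication by $c_\alpha$ on $H^\bullet(\g, M_\alpha)$. To see that this induced action must also be zero, one invokes the Cartan homotopy formula $L_x = d_{CE}\iota_x + \iota_x d_{CE}$ for each $x \in \g$, where $\iota_x$ denotes contraction with $x$ and $L_x$ is the diagonal Lie derivative; this exhibits each $L_x$ as null-homotopic. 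A short bookkeeping argument --- or, equivalently and more conceptually, the identification $H^n(\g,-) = \operatorname{Ext}^n_{U(\g)}(\C,-)$, on which the center of $U(\g)$ acts through either argument, combined with the fact that $C$ acts as zero on the trivial module $\C$ --- then shows that $C$ acts as zero on $H^n(\g, M_\alpha)$. Comparing the two descriptions gives $c_\alpha H^n(\g, M_\alpha) = 0$, and hence $H^n(\g, M_\alpha) = 0$.

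Third, the trivial summands $M_\alpha \cong \C$ must be handled directly, since the Casimir gives no information. For $H^1(\g,\C)$, any $1$-cocycle is a linear functional $\omega: \g \to \C$ vanishing on $[\g, \g] = \g$, so $\omega = 0$. For $H^2(\g,\C)$, one uses the standard bijection between $H^2(\g,\C)$ and equivalence classes of central extensions $0 \to \C \to \wt{\g} \to \g \to 0$: the Levi-Malcev theorem applied to $\wt{\g}$, whose radical is the abelian ideal $\C$, produces a Lie algebra splitting $\g \hookrightarrow \wt{\g}$, showing the extension is trivial and the cocycle is a coboundary. The main obstacle I anticipate is making the Casimir null-homotopy argument of Step 2 fully precise, which requires careful dual-basis bookkeeping inside the Chevalley--Eilenberg complex; the $\operatorname{Ext}$-theoretic reformulation provides a slicker route that sidesteps most of the calculation.
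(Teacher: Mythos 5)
The paper gives no proof of this theorem: it is quoted as Whitehead's lemma and the reader is referred to \cite[\S 18.3]{KasBook95}, so there is no internal argument to compare yours against. Your proposal is the standard textbook proof and is essentially correct: reduce to irreducible coefficients via Weyl's complete reducibility (whose algebraic proof via the Casimir is independent of Whitehead's lemmas, so there is no circularity), kill the nontrivial irreducible summands with the Casimir, and handle the trivial summands by hand using perfectness of $\g$ for $H^1$ and the splitting of central extensions for $H^2$. Two points deserve tightening. First, in the Casimir step, the Cartan-formula route as stated does not quite close: the identity $L_x = d_{CE}\iota_x + \iota_x d_{CE}$ shows that the \emph{diagonal} action of $\g$ (hence of $U(\g)$) is trivial on cohomology, but the operator you must show vanishes on cohomology is the \emph{coefficient} action $\rho(C)$, and $\sum_\lambda L_{x_\lambda}L_{x_\lambda} \neq \rho(C)$ on the complex --- the difference involves the contragredient action on $\wedge^\bullet\g^*$ and cross terms, neither of which obviously dies on cohomology. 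The classical fix is the explicit homotopy $h=\sum_\lambda \rho(x_\lambda)\circ\iota_{x_\lambda}$, whose verification uses invariance of the Casimir tensor; your $\operatorname{Ext}$-theoretic alternative (the center of $U(\g)$ acts on $\operatorname{Ext}^n_{U(\g)}(\C,M_\alpha)$ identically through either variable, acting by $0$ on $\C$ and by $c_\alpha\neq 0$ on $M_\alpha$) is airtight and is the cleaner way to finish this step. Second, for $H^2(\g,\C)=0$ it is safer not to invoke the full Levi--Malcev theorem, since some of its proofs use Whitehead's second lemma; the special case you need --- that a central extension $0\to\C\to\wt{\g}\to\g\to 0$ splits --- follows directly from Weyl's theorem applied to the adjoint representation of $\wt{\g}$, which factors through $\g$ because $\C$ is central and yields an ideal complement to $\C$ isomorphic to $\g$. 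With these two points made precise, your argument is a complete and standard proof of the statement.
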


As noted in \eqref{R-double-comlex:2} of the remarks at the end of Section \ref{ssec:double-complex}, we are interested in the infinite-dimensional $\g$-module  $M = \g_{\ad}^*\otimes U(\g)^{\otimes n}$. As it decomposes into a direct sum of finite-dimensional $\g$-modules, Whitehead's lemma applies to give $H^1(\g, M) = H^2(\g, M) = 0$. We will only need the vanishing of the first cohomology group, which we restate below in terms of the bicomplex of Section \ref{ssec:double-complex}.

\begin{cor}\label{cor:horizontal}
$H^1(\K^{\bullet,n}, \partial_H)\cong H^1(\g,\g_{\ad}^*\otimes U(\g)^{\otimes n})= 0$.
\end{cor}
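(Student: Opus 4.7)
The plan is to prove the corollary in two steps: first, identify the complex $(\K^{\bullet,n},\partial_H)$ with the Chevalley--Eilenberg complex of $\g$ valued in the $\g$-module $M := \g_{\ad}^* \otimes U(\g)^{\otimes n}$; second, deduce the vanishing of $H^1(\g,M)$ from Whitehead's lemma via a direct sum decomposition of $M$ into finite-dimensional pieces.

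For the first step, I would invoke Hom--tensor adjointness. Since $\g_{\ad}$ is finite-dimensional, one has a canonical isomorphism
\[
\K^{m,n} = \Hom(\wedge^m\g \otimes \g_{\ad}, U(\g)^{\otimes n}) \iso \Hom(\wedge^m\g,\, \g_{\ad}^* \otimes U(\g)^{\otimes n}) = \CC^m(\g, M)
\]
for every $m \geq 0$ (with the convention $\wedge^0\g = \C$). A direct comparison of \eqref{eq:del-CE} with the Chevalley--Eilenberg differential shows that this adjunction carries $\partial_H$ to $d_{CE}$: the term $-\omega(\ldots;x_i\cdot v)$ in \eqref{eq:del-CE} precisely encodes the dual $\g$-action on $\g_{\ad}^*$, which, combined with the given action on $U(\g)^{\otimes n}$, yields the diagonal action on $M$. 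Passing to $H^1$ on both sides gives the claimed identification of cohomology groups.

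For the second step, observe that $U(\g)$ is locally finite under the adjoint action of $\g$; hence so is $U(\g)^{\otimes n}$, and therefore so is $M$. Since $\g$ is semisimple, $M$ decomposes as a direct sum $M = \bigoplus_{k} M_k$ of finite-dimensional $\g$-modules. Because $\wedge^m\g$ is finite-dimensional for every $m$, any linear map from $\wedge^m\g$ into $\bigoplus_k M_k$ has image contained in a finite sub--sum, so $\Hom(\wedge^m\g,-)$ commutes with arbitrary direct sums. Consequently, $\CC^\bullet(\g,M) \cong \bigoplus_k \CC^\bullet(\g,M_k)$ as cochain complexes and, taking cohomology, $H^1(\g,M) \cong \bigoplus_k H^1(\g,M_k)$. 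Each summand vanishes by Whitehead's lemma, completing the proof.

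No serious obstacle is anticipated: both steps are routine once the correct framework is in place. The only point requiring care is the sign and action bookkeeping in the Hom--tensor adjunction, particularly the emergence of the dual action on $\g_{\ad}^*$, which must be matched precisely against the subtraction appearing in \eqref{eq:del-CE} so that $\partial_H$ and $d_{CE}$ genuinely coincide under the isomorphism.
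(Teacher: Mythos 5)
Your proposal is correct and follows essentially the same route as the paper: identify $(\K^{\bullet,n},\partial_H)$ with the Chevalley--Eilenberg complex valued in $\g_{\ad}^*\otimes U(\g)^{\otimes n}$ via Hom--tensor adjointness, decompose this module into finite-dimensional summands using local finiteness and semisimplicity, and apply Whitehead's lemma summand by summand. The only difference is that you spell out a few details (the compatibility of $\Hom(\wedge^m\g,-)$ with direct sums and the passage of the dual action into the differential) that the paper leaves implicit.
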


\subsection{Proof of the claim}\label{ssec:pf-claim}
We are now prepared to prove the claim appearing below \eqref{eq:eta} in the proof of Theorem \ref{thm:gens}, which will complete the proof of the theorem.  Recall from \eqref{R-double-comlex:3} of the remarks at the end of Section \ref{ssec:double-complex} that this claim is equivalent to the assertion that there 
is $\varphi\in\Hom(\g_{\ad},U(\g)) = \K^{0,1}$ satisfying 
\begin{equation}\label{claim-varphi}
\partial_H(\varphi) = \gamma\quad \text{ and }\quad \partial_V(\varphi)=\eta,
\end{equation}
 where $\gamma\in\K^{1,1}$ and $\eta\in\K^{0,2}$ are as in \eqref{eq:gamma} and \eqref{eq:eta}, respectively. Moreover, as pointed out in the same remark, the relations for $\gamma$ and $\eta$ obtained in Proposition \ref{P:gamma} are equivalent to
\begin{equation*}
\partial_H(\gamma) = 0,\qquad \partial_V(\eta)=0, \qquad \partial_V(\gamma) = \partial_H(\eta),
\end{equation*}
and $\eta = \eta^{21}$. Note that the latter identity says exactly that $\eta\in \Hom(\g_{\ad},T^2_-(U(\g))$, 
in the notation of Section \ref{ssec:coalg} above. 

Since $\partial_H(\gamma)=0$,  Corollary \ref{cor:horizontal} implies that there is a linear map $\psi:\g_{\ad}\to U(\g)$ satisfying $\gamma = \partial_H(\psi)$.
Let us define 
\begin{equation*}
\eta_1:= \eta - \partial_V(\psi):\g_{\ad}\to  U(\g)^{\otimes 2}.
\end{equation*}
Since $\partial_V(\psi)$  is given 
by $\partial_V(\psi(x)) = (\square - \Delta)(\psi(x))$, we have $\eta_1 = \eta_1^{21}$ and hence $\eta_1\in  \Hom(\g_{\ad},T^2_-(U(\g))$. Furthermore, as $\partial_H$ and $\partial_V$ commute, we have 
\begin{equation*}
\begin{aligned}
\partial_H(\eta_1) & = \partial_H(\eta) - \partial_H(\partial_V(\psi))\\
& = \partial_H(\eta) - \partial_V(\partial_H(\psi)) \\
&= \partial_H(\eta) - \partial_V(\gamma)
\end{aligned}
\end{equation*}
and hence $\partial_H(\eta_1)=0$. Note that this implies that $\eta_1$ is a $\g$-module homomorphism. Indeed, for any $\omega\in\Hom(\g_{\ad},M)$,  $\partial_H(\omega):\g\otimes\g_{\ad}
\to M$ is given by $\partial_H(\omega)(x\otimes y) = x\cdot \omega(y) - \omega([x,y])$ and hence $\partial_H(\omega)=0$
if and only if $\omega$ is a $\g$--intertwiner. Thus, we obtain
\begin{equation*}
\eta_1 \in \Hom_{\g}(\g_{\ad},T^2_-(U(\g))).
\end{equation*}
Moreover, as $\partial_V(\eta)=0$, we have $\partial_V(\eta_1) = \partial_V(\eta) - \partial_V(\partial_V(\psi)) = 0$. We are now in a position
to use Corollary \ref{cor:vertical} to conclude that there exists $\theta\in\Hom_{\g}(\g_{\ad},U(\g))$
(that is, $\partial_H(\theta)=0$)
so that $\eta_1 = \partial_V(\theta)$.
It is then immediate that $\varphi = \psi + \theta$ is the desired element of $\Hom(\g_{\ad},U(\g))$, satisfying the equations \eqref{claim-varphi}.

\section{Homogeneous quantization: relations}\label{sec:hgs-rels}

Our main goal in this section is to prove Theorem \ref{thm:relns}, which outputs a set of relations satisfied by the $\g$-module homomorphism $\J:\g\to \H_1$ constructed in Theorem \ref{thm:gens}, where $\H$ is an arbitrary homogeneous quantization of $(\g[u],\delta)$. These relations will be applied in Section \ref{ssec:uniqueness-thm} to obtain a presentation of $\H$ and prove the main result of this article.
\subsection{Relations of a homogeneous quantization}\label{ssec:thm-relns}

Recall from Section \ref{ssec:nu} that $\nu:\h\to U(\g)^{\h}$ is the linear map defined by the formula $\nu(h) := \frac{1}{2} \sum_{\alpha\in R_+} \alpha(h) x^-_\alpha x^+_\alpha$. 

\begin{thm}\label{thm:relns} Let $\H$ be a homogeneous quantization of $(\g[u],\delta)$, and let $\J:\g\to \H_1$ be as in Theorem \ref{thm:gens}. Then, for each $i,j\in \bfI$, one has 
\begin{align}
[\J(t_i),\J(t_j)]&=\hbar^2[\nu(t_j),\nu(t_i)] \quad  &&\text{ if }\quad \g\ncong\sl_2 \label{eq:Nsl2}\\
\left[[\J(e), \J(f)], \J(h)\right] &= \hbar^2 (f\J(e)-\J(f)e)h  \quad &&\text{ if }\quad \g\cong\sl_2 \label{eq:sl2}
\end{align}
\end{thm}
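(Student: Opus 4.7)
The plan is to form the differences
\begin{equation*}
D_{ij} := [\J(t_i),\J(t_j)]-\hbar^2[\nu(t_j),\nu(t_i)] \qquad (\g\ncong\sl_2)
\end{equation*}
and
\begin{equation*}
D:=[[\J(e),\J(f)],\J(h)]-\hbar^2(f\J(e)-\J(f)e)h \qquad (\g\cong\sl_2),
\end{equation*}
and to proceed in three stages: (i) show each difference is a primitive element of $\H$; (ii) use Proposition \ref{pr:prim} together with the homogeneous grading and $\h$-invariance to place $D_{ij}\in\hbar^2\h$ and $D\in\hbar^3\h=\C\hbar^3 h$; and (iii) show the residual Cartan elements vanish.

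For primitivity in the non-$\sl_2$ case, write $\Omega_i:=[t_i\otimes 1,\Omega]\in\g\otimes\g$. Expanding $\Delta([\J(t_i),\J(t_j)])$ via Theorem \ref{thm:gens} and the multiplicativity of $\Delta$ produces
\begin{equation*}
\Delta([\J(t_i),\J(t_j)])-\square([\J(t_i),\J(t_j)]) = \tfrac{\hbar}{2}\bigl([\square(\J(t_i)),\Omega_j]-[\square(\J(t_j)),\Omega_i]\bigr) + \tfrac{\hbar^2}{4}[\Omega_i,\Omega_j].
\end{equation*}
The $\hbar$-cross term vanishes: the intertwiner property of $\J$ gives $[\J(t_i),x^\pm_\alpha]=\pm\alpha(t_i)\J(x^\pm_\alpha)$, and together with $[t_i,t_j]=0$ and the $\g$-invariance of $\Omega$, a root-space expansion shows that both brackets equal the same manifestly $i$--$j$-symmetric sum over positive roots. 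Combined with Lemma \ref{lem:GNW}\eqref{GNW:3}, which yields $\Delta([\nu(t_j),\nu(t_i)])-\square([\nu(t_j),\nu(t_i)]) = \tfrac{1}{4}[\Omega_i,\Omega_j]$, the $\hbar^2$ contributions also cancel, giving $\Delta(D_{ij})=\square(D_{ij})$. The $\sl_2$ case follows the same strategy but is considerably longer: one expands $\Delta([[\J(e),\J(f)],\J(h)])$ by three nested applications of the coproduct formula, generating terms at orders $\hbar^1,\hbar^2,\hbar^3$, and balances them against $\hbar^2\Delta((f\J(e)-\J(f)e)h)$ using that $e,f,h\in\iota(\g)$ are primitive, the explicit form $\Omega=\tfrac{1}{2}h\otimes h+e\otimes f+f\otimes e$, and the $\sl_2$ relations $[e,f]=h$, $[h,e^\pm]=\pm 2 e^\pm$.

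Once primitivity is established, Proposition \ref{pr:prim} gives $D_{ij}\in\g\otimes\C[\hbar]$; homogeneity forces $D_{ij}\in\hbar^2\g$. Moreover, since $\J$ is a $\g$-intertwiner, $[\h,\h]=0$, and $\nu(\h)\subset U(\g)^\h$, the element $D_{ij}$ centralizes $\iota(\h)$, so $D_{ij}\in\hbar^2\h$. A parallel argument places $D\in\hbar^3\h=\C\hbar^3 h$.

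The final task is to show the Cartan residues $H_{ij}:=D_{ij}/\hbar^2\in\h$ and $\lambda:=D/\hbar^3\in\C$ are zero, and I expect this to be the main obstacle. The strategy is to extract enough information from commutators of $D_{ij}$ and $D$ with $\iota(\g)$ to pin down the residues: for each root $\beta\in R$, one has $[x^+_\beta, D_{ij}] = -\hbar^2\beta(H_{ij})x^+_\beta$, while the Jacobi identity and the intertwiner property give $[x^+_\beta, [\J(t_i),\J(t_j)]] = -\beta(t_i)[\J(x^+_\beta),\J(t_j)] + \beta(t_j)[\J(x^+_\beta),\J(t_i)]$; matching this against the corresponding expansion of $\hbar^2[x^+_\beta,[\nu(t_j),\nu(t_i)]]$ --- and iterating, or invoking an auxiliary primitivity argument to control the mixed brackets $[\J(x^+_\beta),\J(t_k)]$ --- forces $\beta(H_{ij})=0$ for a spanning set of roots, hence $H_{ij}=0$. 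The $\sl_2$ case is treated analogously by commuting $D$ with $\iota(e)$ (or $\iota(f)$) and using the specific form of the right-hand side $(f\J(e)-\J(f)e)h$. Together with the lengthy order-by-order primitivity computation in the $\sl_2$ case, this vanishing step is where I anticipate the most delicate arguments to occur.
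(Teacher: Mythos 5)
Your stages (i) and (ii) coincide with the paper's argument: the primitivity computation for $D_{ij}$ --- cancellation of the order-$\hbar$ cross terms by their symmetry in $i,j$, and of the order-$\hbar^2$ terms against Lemma \ref{lem:GNW}\eqref{GNW:3} --- is exactly Proposition \ref{P:gamma}, and the placement $D_{ij}\in\hbar^2\h$ and $D\in\C\hbar^3 h$ via Proposition \ref{pr:prim}, homogeneity, and weight considerations is likewise the paper's. The issue is stage (iii), which you correctly flag as the main obstacle but do not resolve; this is where the substance of the proof lies, and your sketch does not close.

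Concretely, in the non-$\sl_2$ case your identity $[x_\beta^+,D_{ij}]=-\hbar^2\beta(H_{ij})x_\beta^+$ introduces the unknown brackets $[\J(x_\beta^+),\J(t_k)]$, and neither proposed remedy works as stated: an ``auxiliary primitivity argument'' for $[\J(x),\J(y)]$ with $x\notin\h$ fails because the order-$\hbar$ terms $[\square\J(x),[y\otimes 1,\Omega]]-[\square\J(y),[x\otimes 1,\Omega]]$ no longer cancel (the cancellation in Proposition \ref{P:gamma} used that both arguments lie in $\h$), while ``iterating'' requires a mechanism for returning to known quantities. The paper supplies this mechanism by renormalizing: setting $\T(h)=\J(h)-\hbar\nu(h)$ turns the identity into $[\T(t_i),\T(t_j)]=\hbar^2 g(t_i,t_j)$, and Lemma \ref{L:T} (resting on Lemma \ref{lem:GNW}\eqref{GNW:1}--\eqref{GNW:2}) gives the closed system $[\T(h),x_i^\pm]=\pm\alpha_i(h)x_{i,1}^\pm$ with $x_{i,1}^\pm$ independent of $h$, and $[x_{i,1}^+,x_i^-]=\xi_{i,1}=\T(t_i)+\frac{\hbar}{2}t_i^2$. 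Applying $\ad(x_i^+)$ and then $-\ad(x_i^-)$, with the Cartan element $\alpha_i(t_j)t_i-\alpha_i(t_i)t_j\in\Ker(\alpha_i)$, lands back on a bracket of the form $[\T(\cdot),\T(t_i)]$, which is again $\hbar^2 g(\cdot,t_i)$; this yields $g(t_j,t_i)\in\C t_i$, and antisymmetry gives $g(t_j,t_i)\in\C t_j$, forcing $g(t_i,t_j)=0$ for $i\neq j$ (the case $i=j$ being trivial). In the $\sl_2$ case, commuting with $\iota(e)$ alone produces new unknown triple brackets; the paper instead applies $T=\ad(f)\circ\ad(e)$ to the whole identity and shows, using only the Jacobi identity and the intertwiner property, that both $[[\J(e),\J(f)],\J(h)]$ and $(f\J(e)-\J(f)e)h=\frac{1}{2}[\J(h),feh]$ are $T$-eigenvectors of eigenvalue $6$, whereas $\C h$ lies in the eigenvalue-$2$ eigenspace, whence $a=0$. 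Without these two devices your argument does not establish the vanishing of the Cartan residues.
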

The proof of this theorem will occupy the rest of Section \ref{sec:hgs-rels}. The relation \eqref{eq:Nsl2} will be established in Sections \ref{ssec:Nsl2-1}--\ref{ssec:Nsl2-3} --- note that this still holds for $\g\cong\sl_2$, but reduces to a trivial relation. The identity \eqref{eq:sl2} is then established in Sections \ref{ssec:sl2-1} and \ref{ssec:sl2-2}, under the hypothesis that $\g\cong\sl_2$.

\subsection{Proof of \eqref{eq:Nsl2} I}\label{ssec:Nsl2-1}
We begin by showing that the difference between the left and right-hand sides of \eqref{eq:Nsl2} is of the form $\hbar^2 g(t_i,t_j)$ for an antisymmetric function $g$ on $\h\otimes \h$ with values in $\h$. 
\begin{prop}\label{P:gamma}
There is an antisymmetric linear map $g:\h\otimes \h\to \h$ satisfying
\begin{equation*}
\BB{\J(t_i),\J(t_j)} + \hbar^2 [\nu(t_i),\nu(t_j)] = \hbar^2 g(t_i,t_j) \quad \forall\quad  i,j\in \bfI.
\end{equation*}
\end{prop}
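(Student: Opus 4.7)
The plan is to show that $Z_{ij}:=[\J(t_i),\J(t_j)]+\hbar^2[\nu(t_i),\nu(t_j)]$ is a primitive element of $\H$, of total degree $2$, and of weight $0$ under the adjoint $\h$-action; the combination of Proposition \ref{pr:prim} with these observations will force $Z_{ij}\in \hbar^2\h$, and antisymmetry in $i,j$ will come for free from the brackets on the left-hand side. Once this is achieved, the linear map $g:\h\otimes\h\to \h$ defined by $g(t_i,t_j):=Z_{ij}/\hbar^2$ (which is unambiguous since $\H$ is torsion free) will have the required properties.

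The main step is the computation of $\Delta(Z_{ij})$. Using the coproduct formula provided by Theorem \ref{thm:gens}, and writing $\Xi_i:=[t_i\otimes 1,\Omega]$, one has $\Delta(\J(t_i))=\square(\J(t_i))+\tfrac{\hbar}{2}\Xi_i$. Expanding $\Delta([\J(t_i),\J(t_j)])=[\Delta(\J(t_i)),\Delta(\J(t_j))]$ then produces four terms:
\begin{equation*}
\square([\J(t_i),\J(t_j)]) + \tfrac{\hbar}{2}\bigl([\square(\J(t_i)),\Xi_j]+[\Xi_i,\square(\J(t_j))]\bigr) + \tfrac{\hbar^2}{4}[\Xi_i,\Xi_j].
\end{equation*}
On the other hand, Lemma \ref{lem:GNW}\eqref{GNW:3} supplies
\begin{equation*}
\hbar^2\Delta([\nu(t_i),\nu(t_j)]) = \hbar^2\square([\nu(t_i),\nu(t_j)]) - \tfrac{\hbar^2}{4}[\Xi_i,\Xi_j],
\end{equation*}
so the $[\Xi_i,\Xi_j]$ contributions cancel exactly. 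It remains to check that the mixed $\hbar$-linear brackets $[\square(\J(t_i)),\Xi_j]+[\Xi_i,\square(\J(t_j))]$ vanish. This is a direct computation using that $\J$ is a $\g$-module homomorphism: the identity $[\iota(x_\alpha^{\pm}),\J(t_i)] = \mp\alpha(t_i)\J(x_\alpha^{\pm})$ makes $[\J(t_i)\otimes 1,\Xi_j]$ and $[\Xi_i,\J(t_j)\otimes 1]$ into opposite multiples of $\sum_{\alpha}\alpha(t_i)\alpha(t_j)(\J(x_\alpha^+)\otimes x_\alpha^- + \J(x_\alpha^-)\otimes x_\alpha^+)$, and symmetrically in the second tensor slot. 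Thus $\Delta(Z_{ij})=\square(Z_{ij})$.

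By Proposition \ref{pr:prim}, $Z_{ij}\in \g\otimes_\C\C[\hbar]$. Since $\J(t_i),\J(t_j)\in \H_1$, the bracket $[\J(t_i),\J(t_j)]$ lies in $\H_2$; and since $[\nu(t_i),\nu(t_j)]\in U(\g)=\H_0$, the term $\hbar^2[\nu(t_i),\nu(t_j)]$ also has total degree $2$. Thus $Z_{ij}\in \H_2\cap (\g\otimes\C[\hbar])=\hbar^2\cdot \g$. Finally, both $\J(t_i)$ and $\nu(t_i)\in U(\g)^{\h}$ commute with $\h$ (the former because $\J$ is a $\g$-intertwiner and $t_i\in \h$), so $Z_{ij}$ has $\h$-weight zero; consequently $Z_{ij}\in \hbar^2\h$. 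The antisymmetry $Z_{ij}=-Z_{ji}$ is immediate from the antisymmetry of the commutator. Setting $g(t_i,t_j):=\hbar^{-2}Z_{ij}$ and extending bilinearly then yields the required antisymmetric linear map.

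The only potentially delicate point is the verification that the mixed $\hbar$-terms cancel, which relies on the precise form of Lemma \ref{lem:GNW}\eqref{GNW:3}; everything else is bookkeeping with gradings and weights.
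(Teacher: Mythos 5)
Your proposal is correct and follows essentially the same route as the paper's proof: expand $\Delta([\J(t_i),\J(t_j)])$ via the coproduct formula of Theorem \ref{thm:gens}, cancel the $\hbar^2$-term against Lemma \ref{lem:GNW}\eqref{GNW:3}, kill the $\hbar$-linear cross terms using that $\J$ is a $\g$-intertwiner (the paper phrases this as the symmetry of $[\square\J(t_i),[t_j\otimes 1,\Omega]]$ in $i$ and $j$, which is the same computation as your ``opposite multiples'' observation), and then invoke Proposition \ref{pr:prim} together with the degree and $\h$-weight constraints. Your explicit bookkeeping of the grading and weight-zero arguments only makes more detailed what the paper states in passing; there is no substantive difference.
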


\begin{proof}
We claim that $\BB{\J(t_i),\J(t_j)} + \hbar^2 [\nu(t_i),\nu(t_j)]$ is a primitive element in $\H$.
In view of Proposition \ref{pr:prim}, this will imply that it belongs to $\hbar^2\g$. As it also commutes with $\h$ and 
is antisymmetric in $i$ and $j$, it must therefore be of the form $\hbar^2 g(t_i,t_j)$ for some
$g:\h\wedge \h \to \h$, as desired. To establish the claim, we apply Theorem \ref{thm:gens} and that $\Delta$ is an algebra homomorphism to obtain
\begin{equation*}
\begin{aligned}
\Delta([\J(t_i),\J(t_j)]) ={}& \BB{\Delta(\J(t_i)),\Delta(\J(t_j))}\\
={}&
\BB{
\square{\J(t_i)}+\frac{\hbar}{2}[t_i\otimes 1,\Omega],\square{\J(t_j)}+\frac{\hbar}{2}[t_j\otimes 1,\Omega]
}
\\
={}&\square([\J(t_i),\J(t_j)])+\frac{\hbar^2}{4}\left[ [t_i\otimes 1,\Omega], [t_j\otimes 1,\Omega]\right]\\
&\qquad +\frac{\hbar}{2}\Big([\square\J(t_i),[t_j\otimes 1,\Omega]]-  [\square\J(t_j),[t_i\otimes 1,\Omega]]\Big).
\end{aligned} 
\end{equation*}
Since $\J$ is a $\g$-module homomorphism, the term $[\square\J(t_i),[t_j\otimes 1,\Omega]]$ is symmetric in $i$ and $j$. Indeed, we have
\begin{equation*}
[\square\J(t_i),[t_j\otimes 1,\Omega]]
=\sum_{\alpha\in \Delta^+}\alpha(t_i)\alpha(t_j)(\J\otimes \mathrm{Id} - \mathrm{Id}\otimes \J) (x_\alpha^+\otimes x_\alpha^- + x_\alpha^-\otimes x_\alpha^+).
\end{equation*}
Therefore, the expression for $\Delta([\J(t_i),\J(t_j)])$ obtained above reduces to
\begin{equation*}
\Delta([\J(t_i),\J(t_j)])=\square([\J(t_i),\J(t_j)])+\frac{\hbar^2}{4}\left[ [t_i\otimes 1,\Omega], [t_j\otimes 1,\Omega]\right].
\end{equation*}
On the other hand, by Part \eqref{GNW:3} of Lemma \ref{lem:GNW}, we have
\begin{equation*}
\hbar^2 \Delta([\nu(t_i),\nu(t_j)]) = \hbar^2 \square([\nu(t_i),\nu(t_j)]) -  \frac{\hbar^2}{4} \left[ [t_i\otimes 1,\Omega], [t_j\otimes 1,\Omega]\right],
\end{equation*}
and hence $[\J(t_i),\J(t_j)]+\hbar^2 [\nu(t_i),\nu(t_j)]$ is indeed a primitive element of $\H$, as claimed. \qedhere
\end{proof}

\subsection{Proof of \eqref{eq:Nsl2} II}\label{ssec:Nsl2-2}

Our next step is to show the linear map $g: \h\wedge\h\to \h$ of Proposition \ref{P:gamma} is identically zero. To this end, let us introduce an auxiliary linear map $\T:\h\to \H_1$ by the formula 
\begin{equation*}
\T(h):=\J(h)-\hbar \nu(h) \quad \forall \; h\in\h.
\end{equation*}
The main identity of Proposition \ref{P:gamma} may then be expressed equivalently as
\begin{equation}\label{gamma-T}
[\T(t_i),\T(t_j)]=\hbar^2 g(t_i,t_j) \quad \forall \quad i,j \in \bfI.
\end{equation} 
Indeed, this follows from the observation that the bracket $[\J(t_i),\nu(t_j)]$ is symmetric in $i$ and $j$. Explicitly, one has
\begin{equation*}
[\J(t_i),\nu(t_j)]=\sum_{\alpha\in \Delta^+} \alpha(t_i)\alpha(t_j)\left( x_\alpha^- \J(x_\alpha^+) - \J(x_\alpha^- )x_\alpha^+\right)= [\J(t_j),\nu(t_i)].
\end{equation*}
Due to \eqref{gamma-T}, our task is reduced to showing that $[\T(t_i),\T(t_j)]=0$ for all $i,j\in \bfI$. To achieve this, we will make use of the following lemma. 
\begin{lem}\label{L:T}
For each $i\in \bfI$, define $x_{i,1}^\pm,\xi_{i,1}\in \H_1$ by
\begin{equation*}
x_{i,1}^\pm:=\pm\frac{1}{(\alpha_i,\alpha_i)}[\T(t_i),x_i^\pm] \quad \text{ and }\quad \xi_{i,1}:=\T(t_i)+\frac{\hbar}{2} t_i^2.
\end{equation*}
Then, for each $h\in \h$ and $i,j\in \bfI$, we have
\begin{equation*}
[\T(h),x_{i}^\pm]=\pm \alpha_i(h) x_{i,1}^\pm \quad \text{ and }\quad [x_{i,1}^+,x_j^-]=\delta_{i,j}\xi_{i,1}=[x_i^+,x_{j,1}^-].
\end{equation*}
\end{lem}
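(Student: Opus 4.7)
The plan is to prove both identities by direct computation, reducing every bracket in $\H$ to something already known: the $\g$-equivariance of $\J$ from Theorem \ref{thm:gens} handles the $\J$-part of $\T$, while Lemma \ref{lem:GNW} handles the $\nu$-part. I do not expect any serious obstacle, as the lemma is essentially a rebuilding inside $\H$ of relations that Lemma \ref{lem:GNW} already supplies in $U(\g)$.

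For the first identity, I would begin by expanding
\[
[\T(h), x_i^\pm] = [\J(h), x_i^\pm] - \hbar\,[\nu(h), x_i^\pm].
\]
The first bracket equals $\J([h, x_i^\pm]) = \pm\alpha_i(h)\,\J(x_i^\pm)$ by the $\g$-module homomorphism property of $\J$, and by Lemma \ref{lem:GNW}\eqref{GNW:1} the second equals $\pm\alpha_i(h)\,w_i^\pm$. Hence
\[
[\T(h), x_i^\pm] = \pm\alpha_i(h)\bigl(\J(x_i^\pm) - \hbar\,w_i^\pm\bigr).
\]
Specializing to $h = t_i$ and using $\alpha_i(t_i) = (\alpha_i,\alpha_i)$, one reads off the identification
\[
x_{i,1}^\pm = \J(x_i^\pm) - \hbar\,w_i^\pm,
\]
and substituting this back into the previous display completes the proof of $[\T(h), x_i^\pm] = \pm\alpha_i(h)\,x_{i,1}^\pm$ for arbitrary $h \in \h$.

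For the second identity, I would insert the formula just obtained for $x_{i,1}^+$ and expand
\[
[x_{i,1}^+, x_j^-] = [\J(x_i^+), x_j^-] - \hbar\,[w_i^+, x_j^-].
\]
The first summand equals $\J([x_i^+, x_j^-]) = \delta_{ij}\,\J(t_i)$ by $\g$-equivariance of $\J$, while the second is $\delta_{ij}\bigl(\nu(t_i) - \tfrac{1}{2}t_i^2\bigr)$ by Lemma \ref{lem:GNW}\eqref{GNW:2}. Adding these yields $\delta_{ij}\bigl(\J(t_i) - \hbar\,\nu(t_i) + \tfrac{\hbar}{2}t_i^2\bigr) = \delta_{ij}\bigl(\T(t_i) + \tfrac{\hbar}{2}t_i^2\bigr) = \delta_{ij}\,\xi_{i,1}$, as required. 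The symmetric identity $[x_i^+, x_{j,1}^-] = \delta_{ij}\,\xi_{i,1}$ follows from a parallel computation using the analogous identification $x_{j,1}^- = \J(x_j^-) - \hbar\,w_j^-$ together with the second equality in Lemma \ref{lem:GNW}\eqref{GNW:2}. The only points to track carefully are the $\pm$-sign conventions and the normalization factor $(\alpha_i,\alpha_i)^{-1}$ in the definition of $x_{i,1}^\pm$; once these are accounted for, the argument is mechanical.
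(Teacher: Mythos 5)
Your proof is correct and follows essentially the same route as the paper: identify $x_{i,1}^\pm = \J(x_i^\pm) - \hbar w_i^\pm$ and reduce everything to the $\g$-equivariance of $\J$ together with Lemma \ref{lem:GNW}. The only (harmless) difference is in the final identity, where the paper applies $\ad(\T(t_j))$ to $[x_i^+,x_j^-]=\delta_{ij}t_i$ instead of invoking the second half of Lemma \ref{lem:GNW}\eqref{GNW:2} directly as you do; both arguments are valid.
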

\begin{proof}
As $\J$ is a $\g$-module homomorphism, we have
\begin{equation*}
[\T(h),x_i^\pm]=[\J(h),x_i^\pm]-\hbar[\nu(h),x_i^\pm]= \frac{\alpha_i(h)}{(\alpha_i,\alpha_i)}[\J(t_i),x_i^\pm]-\hbar [\nu(h),x_i^\pm].
\end{equation*}
By Part \eqref{GNW:1} of Lemma \ref{lem:GNW}, $[\nu(h),x_i^\pm]=\pm \alpha_i(h)w_i^\pm$, where $w_i^\pm=\pm \frac{1}{(\alpha_i,\alpha_i)}[\nu(t_i),x_i^\pm]$, 
so the above coincides with $\pm \alpha_i(h) x_{i,1}^\pm$.
Similarly, since $x_{i,1}^+ = \J(x_i^+)-\hbar w_i^+$, we have
\begin{equation*}
[x_{i,1}^+,x_j^-]= \J([x_i^+,x_j^-])-\hbar [w_i^+,x_j^-]=\delta_{i,j} \J(t_i) - \hbar [w_i^+,x_j^-].
\end{equation*}
By Part \eqref{GNW:2} of Lemma \ref{lem:GNW},  $[w_i^+,x_j^-]=\delta_{i,j}(\nu(t_i)-\frac{\hbar}{2}t_i^2)$ and hence the above becomes
\begin{equation*}
[x_{i,1}^+,x_j^-]=\delta_{i,j} (\T(t_i)+\frac{\hbar}{2}t_i^2)=\delta_{i,j} \xi_{i,1}.
\end{equation*}
Finally, applying $\mathrm{ad}(\T(t_j))$ to $[x_i^+,x_j^-]=\delta_{i,j}t_i$ gives 
\begin{equation*}
(\alpha_i,\alpha_j)[x_{i,1}^+,x_j^-]=(\alpha_j,\alpha_j)[x_i^+,x_{j,1}^-]
\end{equation*}
and therefore we must also have $[x_i^+,x_{j,1}^-]=\delta_{i,j} \xi_{i,1}$. \qedhere
\end{proof}

\subsection{Proof of \eqref{eq:Nsl2} III}\label{ssec:Nsl2-3}

With \eqref{gamma-T} and Lemma \ref{L:T} at our disposal, we are now prepared to complete the proof of \eqref{eq:Nsl2} by proving that linear map $g:\h\wedge \h\to \h$ of Proposition \ref{P:gamma} vanishes.

\begin{prop}\label{T:T(h)-commute}
For each $i, j \in \bfI$, we have $[\T(t_i),\T(t_j)]=\hbar^2 g(t_i,t_j)=0$, and thus $g$ is identically zero.
\end{prop}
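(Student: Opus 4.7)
The plan is to evaluate $[\T(t_j), \xi_{i,1}]$ in two different ways for a fixed pair $i \ne j$, and to compare the results. Since $\g \ncong \sl_2$ forces $|\bfI| \geq 2$, such a pair exists; and since $g$ is antisymmetric and $\{t_i\}_{i \in \bfI}$ is a basis of $\h$, it will suffice to prove $g(t_i, t_j) = 0$ for every $i \ne j$.

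On one hand, since $\xi_{i,1} = \T(t_i) + \frac{\hbar}{2} t_i^2$, and since $\T(t_j)$ commutes with all of $\h$ (because $\J$ is a $\g$-module homomorphism and $\nu(t_j) \in U(\g)^{\h}$), the identity \eqref{gamma-T} gives at once
\[
[\T(t_j), \xi_{i,1}] = [\T(t_j), \T(t_i)] = -\hbar^2 g(t_i, t_j).
\]

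On the other hand, writing $\xi_{i,1} = [x_{i,1}^+, x_i^-]$ via Lemma \ref{L:T} and applying the Jacobi identity produces
\[
[\T(t_j), \xi_{i,1}] = [[\T(t_j), x_{i,1}^+], x_i^-] - \alpha_i(t_j)[x_{i,1}^+, x_{i,1}^-].
\]
The inner bracket $[\T(t_j), x_{i,1}^+]$ is then expanded using $x_{i,1}^+ = (\alpha_i,\alpha_i)^{-1}[\T(t_i), x_i^+]$ and a second application of Jacobi, which introduces the term $[[\T(t_j),\T(t_i)], x_i^+] = -\hbar^2 \alpha_i(g(t_i,t_j))\, x_i^+$. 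A third Jacobi expansion, together with the vanishing $[\T(t_i), \xi_{i,1}] = 0$ (again since $\T(t_i)$ commutes with $\h$), shows that the residual contributions involving the a priori unknown element $[x_{i,1}^+, x_{i,1}^-]$ cancel, yielding
\[
[\T(t_j), \xi_{i,1}] = -\frac{\hbar^2 \alpha_i(g(t_i, t_j))}{(\alpha_i, \alpha_i)}\, t_i.
\]

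Equating the two evaluations forces $g(t_i, t_j) \in \C\, t_i$. Running the symmetric computation with the roles of $i$ and $j$ reversed, and using the antisymmetry of $g$, likewise gives $g(t_i, t_j) \in \C\, t_j$. For $i \ne j$ the elements $t_i, t_j$ are linearly independent in $\h$, whence $g(t_i, t_j) = 0$, as required. The main obstacle in executing this plan is the careful bookkeeping involved in the second computation — in particular, verifying that the three nested Jacobi expansions conspire to cancel all terms proportional to $[x_{i,1}^+, x_{i,1}^-]$, so that $g(t_i, t_j)$ can be isolated cleanly as a scalar multiple of $t_i$.
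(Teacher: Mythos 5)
Your argument is correct and is essentially the paper's proof: both extract the constraint $g(t_i,t_j)\in\C t_i$ by bracketing \eqref{gamma-T} against $x_i^{\pm}$ via Lemma \ref{L:T} and then conclude from antisymmetry and the linear independence of $t_i,t_j$. The only cosmetic difference is that the paper first passes to $\T(\alpha_i(t_j)t_i-\alpha_i(t_i)t_j)$ with $\alpha_i(t_j)t_i-\alpha_i(t_i)t_j\in\Ker(\alpha_i)$, which prevents the $[x_{i,1}^+,x_{i,1}^-]$ terms from ever appearing, whereas you track them and verify (correctly) that they cancel.
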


\begin{proof}
By \eqref{gamma-T} we have $[\T(t_i),\T(t_j)]=\hbar^2 g(t_i,t_j)$ for all $i,j\in \bfI$, and hence the equality from the statement of the proposition trivially holds when $i=j$. Suppose instead that $i,j\in \bfI$ are such that $i\neq j$.  
Then by applying $\mathrm{ad}(x_i^+)$ to both sides of \eqref{gamma-T} while using the first identity of Lemma \ref{L:T}, we obtain
\begin{equation*}
-\alpha_i(t_i)[x_{i,1}^+, \T(t_j)]-\alpha_i(t_j)[\T(t_i),x_{i,1}^+]=-\hbar^2 \alpha_i\!\left(g(t_i,t_j)\right) x_i^+,
\end{equation*}
which is equivalent to
\begin{equation*}
[\T( \alpha_i(t_j)t_i-\alpha_i(t_i)t_j), x_{i,1}^+]=\hbar^2 \alpha_i\!\left(g(t_i,t_j)\right) x_i^+.
\end{equation*}
Since $\alpha_i(t_j)t_i-\alpha_i(t_i)t_j\in \mathrm{Ker}(\alpha_i)$, applying $-\mathrm{ad}(x_i^-)$ to both sides of this identity yields
\begin{equation*}
[\T( \alpha_i(t_j)t_i-\alpha_i(t_i)t_j), \xi_{i,1}]=\hbar^2 \alpha_i\!\left(g(t_i,t_j)\right) t_i.
\end{equation*}
As $\xi_{i,1}=\T(t_i)+\frac{\hbar}{2} t_i^2$ and $t_i^2$ commutes with $\T(\h)$, the above is equivalent to
\begin{equation*}
\alpha_i(t_j)g(t_i,t_i)-\alpha_i(t_i)g(t_j,t_i)=\alpha_i\!\left(g(t_i,t_j)\right) t_i. 
\end{equation*}
Since  $g(t_i,t_i)=0$, this is equivalent to 
\begin{equation*}
g(t_j,t_i) = -\frac{\alpha_i\!\left(g(t_i,t_j)\right)}{\alpha_i(t_i)}t_i \in \mathbb{C}t_i.
\end{equation*}
As we also have $g(t_j,t_i) = -g(t_i,t_j) \in \mathbb{C}t_j$, this is only possible if $g(t_i,t_j) = 0$. \qedhere
\end{proof}

\subsection{Proof of \eqref{eq:sl2} I}\label{ssec:sl2-1}
We now shift our focus to establishing the identity \eqref{eq:sl2} of Theorem \ref{thm:relns}. Throughout Sections \ref{ssec:sl2-1} and \ref{ssec:sl2-2}, we assume that $\g=\Lsl_2$. In particular, the Casimir tensor $\Omega$ takes the form  
\begin{equation*}
\Omega = \frac{1}{2} h\otimes h + e\otimes f + f\otimes e
\end{equation*}
and thus, by Theorem \ref{thm:gens}, we have 
\begin{equation}\label{eq:Del-Jefh}
\begin{aligned}
(\Delta-\square)(\J(h)) &= \hbar (e\otimes f - f\otimes e) \\
(\Delta-\square)(\J(e)) &= \frac{\hbar}{2} (h\otimes e - e\otimes h) \\
(\Delta-\square)(\J(f)) &=\frac{\hbar}{2} (f\otimes h-h\otimes f) \\
\end{aligned}
\end{equation}
Our first main step towards establishing \eqref{eq:Nsl2} is to prove the following analogue of Proposition \ref{P:gamma}. 
\begin{prop}\label{pr:sl2-1}
There exists $a\in\h=\C h$ such that
\[
\BB{\BB{\J(e),\J(f)},\J(h)} - \hbar^2 (f\J(e)-\J(f)e)h  = \hbar^3a.
\]
\end{prop}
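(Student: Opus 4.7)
The proposition is the $\sl_2$-analogue of Proposition~\ref{P:gamma}, and my plan is to prove it via the same two-stage strategy. Setting
\[
Z := \BB{\BB{\J(e),\J(f)},\J(h)} - \hbar^2 (f\J(e)-\J(f)e)h,
\]
I would first show that $Z$ is a primitive element of $\H$, and then extract its precise form from Proposition~\ref{pr:prim} by a degree-and-weight count. The latter is immediate: each $\J(x)$ lies in $\H_1$ and $\J$ is a $\g$-module intertwiner, so $\J(e),\J(f),\J(h)$ carry $\ad(h)$-weights $2,-2,0$ respectively; hence both summands defining $Z$ have $\ad(h)$-weight $0$ and total degree $3$. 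Granting primitivity, Proposition~\ref{pr:prim} forces $Z\in\g\otimes_\C\C[\hbar]$, the degree-$3$ constraint yields $Z=\hbar^3 a$ for some $a\in\g$, and the weight-$0$ constraint pins $a$ down to $\h = \C h$.

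The substantive work is to verify $(\Delta-\square)(Z)=0$. Using \eqref{eq:Del-Jefh} I would first expand $\Delta([\J(e),\J(f)]) = [\Delta\J(e),\Delta\J(f)]$ to write it as $\square([\J(e),\J(f)])$ plus explicit correction terms of orders $\hbar$ and $\hbar^2$ in $U(\g)^{\otimes 2}$. This mirrors the opening computation in the proof of Proposition~\ref{P:gamma}, but crucially the order-$\hbar$ correction does \emph{not} vanish by the symmetric-in-$i,j$ argument used there, since here the two arguments are $e$ and $f$ rather than a pair of Cartan elements. Bracketing with $\Delta(\J(h)) = \square\J(h) + \hbar(e\otimes f - f\otimes e)$ then produces $\Delta([[\J(e),\J(f)],\J(h)])$ as $\square([[\J(e),\J(f)],\J(h)])$ plus correction terms of orders $\hbar$, $\hbar^2$, and $\hbar^3$. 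Separately, since $\Delta$ is an algebra homomorphism restricting to $\square$ on $U(\g)$, the coproduct $\hbar^2\Delta((f\J(e)-\J(f)e)h)$ differs from $\hbar^2\square((f\J(e)-\J(f)e)h)$ only through the $\hbar$-parts of $\Delta(\J(e))$ and $\Delta(\J(f))$, producing a controllable correction at order $\hbar^3$.

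The main obstacle is then the bookkeeping: one must check that all non-$\square$ pieces appearing in $\Delta([[\J(e),\J(f)],\J(h)])$ match exactly those arising in $\hbar^2\Delta((f\J(e)-\J(f)e)h)$, so that the two contributions cancel and leave $(\Delta-\square)(Z)=0$. I would organize this cancellation by orders of $\hbar$ and exploit the $\sl_2$-commutation relations to simplify systematically; the specific shape of the second summand in $Z$ is precisely what is needed for the non-primitive terms to cancel. Once primitivity is established, the conclusion $Z\in\hbar^3\h$ follows at once from the degree-and-weight argument outlined above, yielding the scalar $a\in\h=\C h$ asserted by the proposition.
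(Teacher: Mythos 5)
Your overall strategy coincides with the paper's: show that
$Z := \BB{\BB{\J(e),\J(f)},\J(h)} - \hbar^2 (f\J(e)-\J(f)e)h$
is primitive, then invoke Proposition~\ref{pr:prim} together with the degree-$3$ and $\ad(h)$-weight-$0$ constraints to conclude $Z\in\hbar^3\h=\hbar^3\C h$. That reduction is correct and complete as you state it. The substance of the proposition, however, lies entirely in the cancellation you defer to ``bookkeeping,'' and there is one concrete misstep in how you set it up. You assert that $\hbar^2\Delta\bigl((f\J(e)-\J(f)e)h\bigr)$ differs from $\hbar^2\square\bigl((f\J(e)-\J(f)e)h\bigr)$ \emph{only} through the $\hbar$-parts of $\Delta(\J(e))$ and $\Delta(\J(f))$, hence only at order $\hbar^3$. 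This is false: $\square$ is not multiplicative, so the classical factors already contribute. Writing $\mathcal{B}=(f\J(e)-\J(f)e)h=\tfrac12\BB{\J(h),feh}$, one finds
\[
(\Delta-\square)(\mathcal{B}) \;=\; \tfrac12\BB{\square(\J(h)),\mathcal{L}} \;+\; \tfrac{\hbar}{4}\,\square(h)\BB{\BB{h\otimes 1,\Omega},\Omega},
\qquad \mathcal{L}:=(\Delta-\square)(feh)\neq 0,
\]
so $\hbar^2(\Delta-\square)(\mathcal{B})$ has a nonzero contribution already at order $\hbar^2$.

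This order-$\hbar^2$ piece is not an incidental correction: it is exactly what cancels the nonzero order-$\hbar^2$ term $\hbar^2\mathcal{C}$ arising in $(\Delta-\square)\bigl(\BB{\BB{\J(e),\J(f)},\J(h)}\bigr)$, where $\mathcal{C}$ consists of $\tfrac12\square(h)\BB{\square(\J(h)),\Omega}$ plus the bracket of $(\J\otimes\Id+\Id\otimes\J)(e\otimes f-f\otimes e)$ with $e\otimes f-f\otimes e$. If you carry out your plan as literally described, you will be left with an uncancelled $\hbar^2\mathcal{C}\neq 0$ and will wrongly conclude that $Z$ fails to be primitive. The repair is simply to compute $(\Delta-\square)(\mathcal{B})$ honestly, retaining the $\mathcal{L}$ term; the identity $\mathcal{C}=\tfrac12\BB{\square(\J(h)),\mathcal{L}}$ then becomes the crux of the verification, while the order-$\hbar$ terms cancel among themselves (essentially by the Jacobi identity and the intertwiner property of $\J$, as you anticipate) and the order-$\hbar^3$ terms $\tfrac{\hbar^3}{4}\square(h)\BB{\BB{h\otimes1,\Omega},\Omega}$ match on the nose. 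With that correction, your outline reproduces the paper's four-step computation.
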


\begin{proof}
Again, we will show that the left-hand side is primitive. Since it of weight zero, it will then follow automatically from Proposition \ref{pr:prim} that it is of the form $\hbar^3 a$ for some $a\in \h$.
We break this computation into four main steps.

\noindent {\bf Step 1.}  $\J(e)$ and $\J(f)$ satisfy the identity 
\begin{equation*}
(\Delta-\square)\BB{\J(e),\J(f)} = \hbar (\J\otimes\Id+\Id\otimes\J)(e\otimes f - f\otimes e) 
-\frac{\hbar^2}{2}\square(h)\Omega.
\end{equation*}

To establish this identity, we first apply the formulas \eqref{eq:Del-Jefh} to obtain
\[
\begin{aligned}
(\Delta-\square)\BB{\J(e),\J(f)} ={}&\frac{\hbar}{2} \lp
\BB{h\otimes f - f\otimes h, \square(\J(e))} + 
\BB{h\otimes e - e\otimes h, \square(\J(f))}
\rp \\
&-\frac{\hbar^2}{4} \BB{h\otimes e - e\otimes h, h\otimes f - f\otimes h}.
\end{aligned}
\]
Now, we use the fact that $\J$ is an $\Lsl_2$-intertwiner to get
\[
\begin{aligned}
\BB{h\otimes f - f\otimes h, \square(\J(e))} &= 2\J(e)\otimes f - 2 f\otimes \J(e) - h\otimes \J(h) + \J(h)\otimes h, \\
\BB{h\otimes e - e\otimes h, \square(\J(f))} &= 2e\otimes \J(f) - 2\J(f)\otimes e  + h\otimes \J(h) - \J(h)\otimes h.
\end{aligned}
\]
Therefore, the coefficient of $\hbar$ in the above expression for $(\Delta-\square)\BB{\J(e),\J(f)}$ is $(\J\otimes\Id+\Id\otimes\J)(e\otimes f - f\otimes e)$, as claimed.
To see that the coefficient of $\hbar^2$ is $(-1/2)\square(h)\Omega$, observe that
\begin{align*}
[h\otimes e - e\otimes h, h\otimes f - f\otimes h] ={}& h^2\otimes h + h\otimes h^2 -hf\otimes eh + fh\otimes he \\ 
&- eh\otimes hf + he\otimes fh \\
={}& h^2\otimes h + h\otimes h^2 + 2hf\otimes e + 2f\otimes he \\
&+ 2e\otimes hf + 2he\otimes f \\
={}& 2 \square(h)\Omega,
\end{align*}
which outputs the desired result and completes our first step.

\medskip

\noindent {\bf Step 2.} Let $\mathcal{A} = \BB{\BB{\J(e),\J(f)},\J(h)}$. Then $\mathcal{A}$ satisfies
\begin{equation*}
(\Delta-\square)(\mathcal{A}) = \hbar^2 \mathcal{C} +\frac{\hbar^3}{4}\square(h)\BB{\BB{h\otimes 1,\Omega},\Omega},
\end{equation*}
where $\mathcal{C}$ is given by the formula 
\begin{equation*}
\mathcal{C} ={} \frac{1}{2}\square(h)\BB{\square(\J(h)),\Omega} + \BB{(\J\otimes\Id+\Id\otimes\J)(e\otimes f - f\otimes e), e\otimes f - f\otimes e}.
\end{equation*}

This step of our proof follows from the identity obtained in Step 1, together with the formula  $(\Delta-\square)(\J(h)) = \hbar (e\otimes f - f\otimes e) = \frac{\hbar}{2}[h\otimes 1,\Omega]$ 
from \eqref{eq:Del-Jefh}, and the fact that $\square(h)$ commutes both with $\square(\J(h))$ and $\Omega$. In more detail, using these results we obtain
\begin{equation*}
\Delta(\mathcal{A})=\square(\mathcal{A})+ \hbar \mathcal{C}_0+\hbar^2 \mathcal{C}  +\frac{\hbar^3}{4}\square(h)\BB{\BB{h\otimes 1,\Omega},\Omega},
\end{equation*}
where $\mathcal{C}_0$ is the element
\begin{align*}
\mathcal{C}_0 ={}& \BB{\lp\J\otimes\Id+\Id\otimes\J\rp(e\otimes f - f\otimes e), \square(\J(h))}\\
&+ \BB{\square\lp\BB{\J(e),\J(f)}\rp, e\otimes f - f\otimes e}.
\end{align*}
Thus, it remains to see that $\mathcal{C}_0=0$. This is a consequence of the following easy to verify identities:
\begin{equation*}
\begin{aligned}
\BB{\J(e)\otimes f + e\otimes\J(f),\square(\J(h))} &= \BB{\J(e),\J(h)}\otimes f + e\otimes \BB{\J(f),\J(h)} \\
\BB{\J(f)\otimes e + f\otimes\J(e),\square(\J(h))} &= \BB{\J(f),\J(h)}\otimes e + f\otimes \BB{\J(e),\J(h)} \\
\BB{\BB{\J(e),\J(f)}\otimes 1, e\otimes f-f\otimes e} &= \BB{\J(h),\J(e)}\otimes f + \BB{\J(f),\J(h)}\otimes e \\
\BB{1\otimes\BB{\J(e),\J(f)}, e\otimes f-f\otimes e} &= f\otimes \BB{\J(e),\J(h)} + e\otimes \BB{\J(h),\J(f)}
\end{aligned}
\end{equation*}

\medskip

\noindent {\bf Step 3.} Let $\mathcal{B} = (f\J(e)-\J(f)e)h$. Then $\mathcal{B}$ satisfies
\begin{equation}\label{eq:pf-sl2-3}
(\Delta-\square)(\mathcal{B}) = \frac{1}{2}\BB{\square(\J(h)),\mathcal{L}} + \frac{\hbar}{4}\square(h)
\BB{\BB{h\otimes 1,\Omega},\Omega},
\end{equation}
where $\mathcal{L} = (\Delta-\square)(feh)$. 

First observe that $\mathcal{B}$ can be expressed as $
\mathcal{B} = \frac{1}{2} \BB{\J(h),feh}$. Therefore, applying $\Delta$ to $\mathcal{B}$ while using that $\Delta(feh)=\square(feh)+\mathcal{L}$ yields the formula
\begin{align*}
\Delta(\mathcal{B}) &= \frac{1}{2} \BB{\square(\J(h))+\frac{\hbar}{2}[h\otimes 1,\Omega],\Delta(feh)} \\
&= \square(\mathcal{B}) + \frac{1}{2}\BB{\square(\J(h)),\mathcal{L}} + \frac{\hbar}{4}\BB{\BB{h\otimes 1,\Omega},\Delta(feh)}.
\end{align*}
Now let us write $\Delta(feh) = \Delta(fe)\square(h)$. Since both $\square(h)$ and $\Delta(fe)$ commute with $\Omega$, and $\square(h)$ also commutes with $h\otimes 1$, we have
\begin{align*}
\BB{\BB{h\otimes 1,\Omega},\Delta(feh)}
&= \BB{\BB{h\otimes 1,\Omega},\Delta(fe)}\square(h)=[[h\otimes 1,\Delta(fe)],\Omega]\square(h).
\end{align*}
Therefore, Step 3 follows from the observation that
$[h\otimes 1,\Delta(fe)] = [h\otimes 1,\Omega]$. 

\medskip 

\noindent {\bf Step 4.} The elements $\mathcal{A}$ and $\mathcal{B}$ from Steps 2 and 3 satisfy $(\Delta-\square)(\mathcal{A} - \hbar^2\mathcal{B}) = 0$. Equivalently, one has 
\begin{equation*}
\BB{\BB{\J(e),\J(f)},\J(h)} - \hbar^2 (f\J(e)-e\J(f))h \in \operatorname{Prim}(\H).
\end{equation*}

To prove this, we combine the identities from Steps 2 and 3 to obtain
\begin{equation*}
(\Delta-\square)(\mathcal{A} - \hbar^2\mathcal{B}) = \hbar^2 \lp \mathcal{C}- \frac{1}{2}\BB{\square(\J(h)),\mathcal{L}}\rp,
\end{equation*}
where $\mathcal{C}$ and $\mathcal{L}$ are as in the statements of Steps 2 and 3, respectively. Next, observe that we can rewrite $\mathcal{L}$ as 
\begin{align*}
\mathcal{L} &= \Delta(feh) - \square(feh) \\
&= \Delta(fe)\square(h) - \square(fe)\square(h) + fe\otimes h + h\otimes fe\\
&= (e\otimes f + f\otimes e)\square(h) + fe\otimes h + h\otimes fe ,
\end{align*}
which gives  $\BB{\square(\J(h)),\mathcal{L}} = \BB{\square(\J(h)),\Omega\square(h) + fe\otimes h + h\otimes fe}$. Therefore, by definition of the element $\mathcal{C}$, we have 
\begin{align*}
\mathcal{C} - \frac{1}{2}\BB{\square(\J(h)),\mathcal{L}} ={}& \BB{\lp\J\otimes\Id+\Id\otimes\J\rp(e\otimes f - f\otimes e),
e\otimes f - f\otimes e} \\
&-\frac{1}{2} \BB{\square(\J(h)),h\otimes fe + fe\otimes h}
\end{align*}
which is easily seen to be zero, by a direct verification. This completes Step 4, and thus the proof of Proposition \ref{pr:sl2-1}. 
\end{proof}

\subsection{Proof of \eqref{eq:sl2} II}\label{ssec:sl2-2}
Armed with Proposition \ref{pr:sl2-1}, we are now prepared to complete the proof of Theorem \ref{thm:relns} by establishing the relation \eqref{eq:sl2}. Recall that we are given a homogeneous quantization $\H$ of  $(\Lsl_2[u],\delta)$, equipped with $\J:\sl_2\to \H_1$ as in Theorem \ref{thm:gens}. By Proposition \ref{pr:sl2-1}, there is an element $a\in \C h$ satisfying
\[
\BB{\BB{\J(e),\J(f)},\J(h)} - \frac{\hbar^2}{2} \BB{\J(h),feh} =\hbar^3 a.
\]
We wish to show that $a=0$. To this end, observe that $\C h\subset \sl_2 $ is contained in the eigenspace for $T=\ad(f)\circ\ad(e)$ with eigenvalue $2$, and hence $T(a)=2a$. 
We will now verify that if $a\neq 0$, then it is an eigenvector for this same operator with eigenvalue $6$, which is impossible. We begin by computing the action of $T$ on $\BB{\BB{\J(e),\J(f)},\J(h)}$:
\begin{align*}
T\left(\BB{\BB{\J(e),\J(f)},\J(h)}\right)
={}& 
\big[f, \BB{\BB{\J(e),\J(h)},\J(h)}
-2\BB{\BB{\J(e),\J(f)},\J(e)} \big]\\
={}&
4\BB{\BB{\J(e),\J(f)},\J(h)}+2\BB{\BB{\J(e),\J(h)},\J(f)}\\
&+2\BB{\BB{\J(h),\J(f)},\J(e)} +6\BB{\BB{\J(e),\J(f)},\J(h)}\\
={}& 
6\BB{\BB{\J(e),\J(f)},\J(h)}.
\end{align*}
In order to apply $T$ to $\BB{\J(h),feh}$, it is convenient to replace
$fe$ by $\kappa$, where $\kappa$ is half of the quadratic central Casimir element in $U(\sl_2)$:
\begin{equation*}
\kappa = \frac{1}{4}h^2 + \frac{1}{2}h + fe.
\end{equation*}
Since $\J(h)$ commutes with $h$, we then have $[\J(h),feh]=[\J(h),\kappa]h$. Applying $\ad(e)$ to this element then gives
\begin{equation*}
\ad(e)(\BB{\J(h),\kappa}h) = -2\BB{\J(e),\kappa}h -2\BB{\J(h),\kappa}e.
\end{equation*}
Applying now $\ad(f)$ to this, we find that
\begin{equation*}
T(\BB{\J(h),\kappa}h) = 4\BB{\J(h),\kappa}h - 4 \BB{\J(e)f + \J(f)e,\kappa}.
\end{equation*}
Since $\J(e)f + \J(f)e$ is of weight zero, we may replace $\kappa$ by $fe$ in the last bracket. That $T(\BB{\J(h),\kappa}h)=6\BB{\J(h),\kappa}h$ then follows immediately from the identity
\begin{equation*}
\BB{\J(e)f + \J(f)e, fe} = (\J(f)e-f\J(e))h = -\frac{1}{2} \BB{\J(h),\kappa}h
\end{equation*}
which is easily verified directly. Thus, we have proven that $T(a)=  6a$, and hence that $a=0$.  This completes the proof of \eqref{eq:sl2}.	

\section{Uniqueness of homogeneous quantization}\label{sec:hgs-unique}

In this section, we combine Theorem \ref{thm:current-min} with the main results of Sections \ref{sec:hgs-gens} and \ref{sec:hgs-rels} --- Theorems \ref{thm:gens} and \ref{thm:relns} --- in order to obtain a proof of Theorem \ref{thm:main}, which is reformulated as Theorem \ref{thm:uniqueness} below. We conclude in Section \ref{ssec:existence}  by providing a brief literature review, which connects our results to the theory of Yangians by expanding on the remarks made in Section \ref{ssec:intro-remarks}. 

\subsection{The graded algebra \texorpdfstring{$\U$}{U}}\label{ssec:yangian}

Let $\U$ be the unital associative $\C[\hbar]$-algebra generated by  $\{\iota(x),J(x)\}_{x\in \g}$, with the  following defining relations:
\begin{enumerate}\setlength{\itemsep}{5pt}
\item\label{Y-min:1} $\iota: \g \to \U$ is a Lie algebra map and $J: \g \to \U$ is a $\g$-module homomorphism.
That is, for all $x,y\in\g$ and $\lambda,\mu\in\C$, we have:
\[
\begin{aligned}
\iota(\lambda x + \mu y) = \lambda \iota(x) + \mu \iota(y), &\quad& \iota([x,y]) = [\iota(x),\iota(y)], \\
J(\lambda x + \mu y) = \lambda J(x) + \mu J(y), &\quad& J([x,y]) = [\iota(x),J(y)].
\end{aligned}
\]
In particular, we may extend $\iota$ to a $\C$-algebra homomorphism $U(\g)\to \U$. 
\item\label{Y-min:2} For each $i,j\in \bfI$, one has
\begin{align*}
[J(t_i),J(t_j)]&=\hbar^2\iota([\nu(t_j),\nu(t_i)])  &&\text{ if }\quad \g\ncong\sl_2 \\
\left[[J(e), J(f)], J(h)\right] &= \hbar^2 (\iota(f)J(e)-J(f)\iota(e))\iota(h)  &&\text{ if }\quad \g\cong\sl_2 
\end{align*}
\end{enumerate}

It follows from this definition that $\U$ is an $\N$-graded algebra over the graded ring $\C[\hbar]$, with grading determined by
 $\deg \iota(x)=0$ and $\deg J(x)=1$, for all $x\in \g$. Furthermore, by Theorem \ref{thm:current-min},  the assignment
\begin{equation}\label{U/hU=U(g[u])}
 \iota(x)\mapsto x \quad \text{ and }\quad J(x)\mapsto G(x)=xu \quad \forall\;x\in \g
\end{equation}
extends to a surjective homomorphism of graded $\C[\hbar]$-algebras $\U\twoheadrightarrow U(\g[u])$ with kernel $\hbar \U$, where  $\hbar$ is understood to act trivially on $U(\g[u])$. Thus, $\U$ is an $\N$-graded algebra deformation of $U(\g[u])$ over $\C[\hbar]$.

\subsection{The uniqueness theorem}\label{ssec:uniqueness-thm}
The following theorem,  coupled with the formulas for $\Delta(\J(x))$ from Theorem \ref{thm:gens} and the definition of $\U$ given in Section \ref{ssec:yangian}, immediately implies Theorem \ref{thm:main}.
\begin{thm}\label{thm:uniqueness}
Let $\H$ be a homogeneous quantization of $(\g[u],\delta)$. Then $\H$ is isomorphic to $\U$ as an $\N$-graded $\C[\hbar]$-algebra. An isomorphism $\Psi_\H:\U\iso \H$ is given by  
\begin{equation*}
\Psi_\H(\iota(x))=x \quad \text{ and }\quad \Psi_\H(J(x))=\J(x) \quad \forall\; x\in \g,
\end{equation*}
where  $\J:\g\to \H_1$ is as in Theorem \ref{thm:gens}.  
 Moreover, if $\H^{\prime}$ is any other homogeneous quantization of $(\g[u],\delta)$, then the composition 
\begin{equation*}
\Psi_{\H^\prime}\circ \Psi_\H^{-1}:\H \iso \H^{\prime}
\end{equation*}
is an isomorphism of graded Hopf algebras over $\C[\hbar]$.
\end{thm}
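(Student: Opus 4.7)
The plan is to assemble Theorems \ref{thm:gens} and \ref{thm:relns} with the minimal presentation of $\g[u]$ from Theorem \ref{thm:current-min}. First I would construct $\Psi_\H: \U \to \H$ from the universal property of $\U$; then verify it is a graded $\C[\hbar]$-algebra isomorphism by reducing modulo $\hbar$ and invoking a graded Nakayama-type argument; and finally check that $\Phi := \Psi_{\H^\prime} \circ \Psi_\H^{-1}$ respects the Hopf structure by examining the coproduct and counit on generators.

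The existence of $\Psi_\H$ is immediate: the defining relations \eqref{Y-min:1} of $\U$ hold in $\H$ under the assignments $\iota(x) \mapsto x$ and $J(x) \mapsto \J(x)$ by Theorem \ref{thm:gens}, while the relations \eqref{Y-min:2} hold by Theorem \ref{thm:relns}. Grading compatibility is automatic, since $\deg(\iota(x)) = 0 = \deg(x)$ and $\deg(J(x)) = 1 = \deg(\J(x))$.

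To see $\Psi_\H$ is bijective, I would reduce modulo $\hbar$. Under the identifications $\U/\hbar\U \cong U(\g[u])$ (from \eqref{U/hU=U(g[u])}) and $\H/\hbar\H \cong U(\g[u])$ (from axiom \eqref{N-quant:2}), the induced map $\bar\Psi_\H$ becomes a graded algebra endomorphism of $U(\g[u])$ sending $x \mapsto x$ and $xu \mapsto xu$. Since $\g[u]$ is generated as a Lie algebra by $\g$ and $\g u$ --- which is the content of Theorem \ref{thm:current-min} --- this endomorphism is the identity. Surjectivity of $\Psi_\H$ then follows because the cokernel $C$ satisfies $C = \hbar C$, which forces $C = 0$ by induction on degree in the $\N$-graded category (one has $C_0 = 0$ and $C_n = \hbar C_{n-1}$ for $n \geq 1$). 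Injectivity is dual: any $k \in \ker(\Psi_\H)$ reduces to $0$ modulo $\hbar$ and thus lies in $\hbar\U$; writing $k = \hbar k'$ and invoking the $\hbar$-torsion freeness of $\H$ from axiom \eqref{N-quant:1} yields $k' \in \ker(\Psi_\H)$, so $\ker(\Psi_\H) = \hbar \ker(\Psi_\H)$ and vanishes by the same degree induction.

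For the Hopf assertion, $\Phi: \H \iso \H^{\prime}$ is a graded $\C[\hbar]$-algebra isomorphism sending $x \mapsto x$ and $\J_\H(x) \mapsto \J_{\H^{\prime}}(x)$ for all $x \in \g$. The surjectivity argument shows that these elements generate $\H$ as a $\C[\hbar]$-algebra, so it suffices to verify that $\Phi$ intertwines the coproducts and counits on them. On $x \in \g$, both quantizations give $\Delta(x) = \square(x)$ and $\varepsilon(x) = 0$. On $\J(x)$, Theorem \ref{thm:gens} provides the identical formula $\Delta(\J(x)) = \square(\J(x)) + \frac{\hbar}{2}[x \otimes 1, \Omega]$ in both $\H$ and $\H^{\prime}$, and Remark \ref{rem:Hopf} gives $\varepsilon(\J(x)) = 0$ in both. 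Hence $\Phi$ is a bialgebra isomorphism, and antipode-compatibility follows automatically. The main subtlety in the overall argument is really the reduction step: it is Theorem \ref{thm:current-min} that ensures the generators of $\U$ correspond to a generating set of $U(\g[u])$, without which no Nakayama-style reasoning would yield the surjectivity of $\Psi_\H$.
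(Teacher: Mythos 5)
Your proposal is correct and follows essentially the same route as the paper: construct $\Psi_\H$ from the universal property of $\U$ via Theorems \ref{thm:gens} and \ref{thm:relns}, reduce modulo $\hbar$ using \eqref{U/hU=U(g[u])} and axiom \eqref{N-quant:2} to see the classical limit is the identity on $U(\g[u])$, and deduce bijectivity from gradedness and torsion-freeness. Your explicit Nakayama-style induction on degree simply spells out the step the paper leaves implicit, and the Hopf compatibility check on generators matches the paper's appeal to Theorem \ref{thm:gens} and Remark \ref{rem:Hopf}.
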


\begin{proof} The second assertion of the theorem, concerning  the composition $\Psi_{\H^\prime}\circ \Psi_\H^{-1}$, is a consequence of the first assertion and the formulas for the coproduct $\Delta$, counit $\varepsilon$, and antipode $S$ of $\H$ given in Theorem \ref{thm:gens} and Remark \ref{rem:Hopf}. It is thus sufficient to establish that first statement. 

By Theorems \ref{thm:gens} and \ref{thm:relns}, the elements $\{x,\J(x)\}_{x\in \g}\subset \H$ satisfy the defining relations \eqref{Y-min:1} and \eqref{Y-min:2} of $\U$ from Section \ref{ssec:yangian}. Therefore, there is a graded algebra homomorphism 
\begin{equation*}
\Psi_\H:\U\to \H
\end{equation*}
 given as in the statement of the theorem. Since $\U$ and $\H$ are $\N$-graded $\C[\hbar]$-modules and $\H$ is torsion free, to see that $\Psi_\H$ is an isomorphism it suffices to prove that its classical limit $\psi_\H:\U/\hbar\U\to \H/\hbar \H$, obtained by reducing modulo $\hbar$, is an isomorphism.  
Using the identifications of $\H/\hbar \H$ and $\U/\hbar\U$ with $U(\g[u])$ provided via \eqref{N-quant:2} of Definition \ref{ssec:hgs-q-defn} and \eqref{U/hU=U(g[u])}, respectively, we may view $\psi_\H$ as an algebra homomorphism
\begin{equation*}
\psi_\H:U(\g[u])\to U(\g[u]).
\end{equation*}
Since the images of $\J(x)$ and $J(x)$  in $U(\g[u])$ both coincide with $G(x)=xu\in \g[u]$ and $\psi_\H$ is the identity on $\g$, we have $\psi_\H=\Id_{U(\g[u])}$. This proves that $\Psi_\H$ is an isomorphism. \qedhere
\end{proof}

\subsection{Concluding remarks}\label{ssec:existence}
  We now give a number of remarks closely related to the existence portion of Drinfeld's Theorem \ref{thm:Dr}, which expand upon the comments from Section \ref{ssec:intro-remarks} and serve to position our results within the theory of Yangians. 
  
  Suppose first that $\g \ncong \sl_2$. Then, as noted in Section \ref{ssec:intro-remarks}, the graded algebra $\U$ introduced in Section \ref{ssec:yangian} is precisely the Yangian $\Yhg$ of the simple Lie algebra $\g$, expressed in the presentation obtained in Theorem 2.13 of \cite{GNW}. This theorem of \cite{GNW} improves upon the presentation for $\Yhg$ obtained by Levendorskii in \cite[Thm.~1.2]{Lev-Gen}, by showing that one can omit its degree 3 defining relation \cite[(1.6)]{Lev-Gen}, provided $\g$ is of rank at least two.  Here we note that the results of \cite{GNW,Lev-Gen} are written in terms of the degree $1$ elements 
\begin{equation*}
\tilde{h}_{i,1}=J(t_i)-\hbar \iota(\nu(t_i)) \quad \text{ and }\quad x_{i,1}^\pm =\pm \frac{1}{(\alpha_i,\alpha_i)}[\tilde{h}_{i,1},\iota(x_i^\pm)],
\end{equation*}
which play the roles of $\T(t_i)$ and $x_{i,1}^\pm$ from Lemma \ref{L:T}, rather than the $\g$-module homomorphism $J$ from the definition of $\U$.
If instead  $\g\cong \sl_2$, then $\U$ is the Yangian $Y_\hbar(\sl_2)$ expressed in a well-know variation of Levendorskii's realization \cite{Lev-Gen}, using the elements $J(e)$, $J(f)$ and $J(h)$; see \cite[\S2.9]{Mo-Handbook}, for instance. We refer the reader to \cite[\S A]{GRWEquiv} for a detailed proof of the equivalence of the relation from \eqref{Y-min:2} in the Definition of $\U$ and the degree $3$ relation \cite[(1.6)]{Lev-Gen}.

An important consequence of these identification of $\U$ with $\Yhg$, and established results from the theory of Yangians, is  that $\U$ is in fact a homogeneous quantization of $(\g[u],\delta)$.
Although this fact has not played a direct role in our results, it is worth highlighting a few crucial ingredients that make up its proof:
\begin{enumerate}\setlength{\parskip}{3pt}
\item The main obstruction to establishing that $\U$ is a graded Hopf algebra lies in showing that the formulas 
\begin{equation*}
\Delta(\iota(x))=\square(\iota(x)) \quad \text{ and }\quad \Delta(J(x))=\square(J(x))+\frac{\hbar}{2}[\iota(x)\otimes 1,\Omega_\iota],
\end{equation*}
for all $x\in\g$, 
determine a  $\C[\hbar]$-algebra homomorphism $\Delta:\U\to \U\otimes \U$, where $\Omega_\iota=(\iota\otimes \iota)(\Omega)$. For $\g\ncong \sl_2$, a proof of this was given in Theorem 4.9 of \cite{GNW}; see also Remark 4.10 and Proposition 4.24 therein.

 For $\g\cong\sl_2$, it is unclear to the authors if a direct proof of this fact has appeared in the literature, using the defining relations of $\U$. That being said, it is straightforward to see that the defining relations \eqref{Y-min:1} for $\U$ are preserved by $\Delta$. That the relevant relation of \eqref{Y-min:2} is also preserved is in fact a consequence of the computations carried out in the proof of Proposition \ref{pr:sl2-1}. Indeed, these computations show that the element
\begin{align*}
[[\Delta(J(e)), &\Delta(J(f))], \Delta(J(h))]\\
&-\hbar^2 \big(\Delta(\iota(f))\Delta(J(e))-\Delta(J(f))\Delta(\iota(e))\big)\Delta(\iota(h))
\end{align*}
coincides with 
\begin{equation*}
\square\big(\left[[J(e), J(f)], J(h)\right]-\hbar^2 (\iota(f)J(e)-J(f)\iota(e))\iota(h)\big),
\end{equation*}
which vanishes as a consequence of \eqref{Y-min:2}. 
\item Given the above, it follows by Remark \ref{rem:Hopf} that the counit $\varepsilon$ and antipode $S$ of $\U$ are uniquely determined by the formulas
\begin{gather*}
\varepsilon(\iota(x))=\varepsilon(J(x))=0, \quad S(\iota(x))=-\iota(x), \quad S(J(x))=-J(x)+\frac{\hbar}{4}c_\g \iota(x),
\end{gather*}
for all $x\in \g$, where $c_\g$ is the eigenvalue of quadratic Casimir element $C\in U(\g)$ on the adjoint representation of $\g$, as in Remark \ref{rem:Hopf}.  It follows easily from the definition of $\U$ that these formulas indeed define a graded algebra homomorphism $\varepsilon:\U\to \C[\hbar]$ and anti-algebra homomorphism $S:\U\to \U$. By construction, they satisfy the counit and antipode anxioms of a Hopf algebra. 

\item Finally, it follows from the above remarks and the formulas for $\delta(x)$ and $\delta(G(x))$ given in Section \ref{ssec:current-min} that $\U$ will satisfy all of the conditions of Definition \ref{ssec:hgs-q-defn} provided it is torsion free as a $\C[\hbar]$-module.  That the Yangian $\Yhg$ has this property is a consequence of its Poincar\'{e}--Birkhoff--Witt theorem, which was first established by Levendorskii in \cite{LevPBW} in Drinfeld's \textit{new} or \textit{loop} presentation \cite{DrNew}; see also Proposition 2.2 of \cite{GRWEquiv} and Theorem B.2 of \cite{FiTs19}. Since this presentation of $\Yhg$ is isomorphic to $\U$ by the results of \cite{GNW} and \cite{Lev-Gen}, we can conclude that $\U$ is a homogeneous quantization of $(\g[u],\delta)$. 
\end{enumerate}

\bibliographystyle{amsalpha}
\bibliography{Yangians}

\end{document}